\def\RR{{\mathbb R}}
\def\a{\alpha}
\def\b{\beta}
\def\g{\gamma}
\def\d{\delta}
\def\e{\eta}
\def\p{\partial}
\def\R{\bm{R}}
\def\W{\bm{W}}
\def\C{\bm{C}}
\def\D{\bm{D}}
\def\X{\bm{X}}
\def\A{\bm{A}}
\def\E{\bm{E}}
\def\H{\bm{H}}
\def\P{\bm{\mathcal{PF}^E_k}}
\def\Pa{\bm{\mathcal{PF}^E_{k,\alpha}}}
\def\Pm{\bm{\mathcal{PF}^M_k}}
\def\Pma{\bm{\mathcal{PF}^M_{k,\alpha}}}
\def\T{\bm{\mathcal{T}}}
\def\gb{\bm{\gamma}}
\def\gbr{\bm{\bar{\gamma}}}
\def\Gb{\bm{\Gamma}}
\def\L{\mathbf{\Lambda}}
\def\S{\mathcal{S}}
\newtheorem{ackn}{Acknowledgments\!}
\newtheorem{theorem}{\textbf{Theorem}}[section]
\newtheorem{lemma}[theorem]{\textbf{Lemma}}
\newtheorem{proposition}[theorem]{\textbf{Proposition}}
\newtheorem{definition}[theorem]{\textbf{Definition}}
\newtheorem{remark}[theorem]{\textbf{Remark}}
\numberwithin{equation}{section}
\title[A Weyl Entropy of Pure Spacetime Regions]
{A Weyl Entropy of Pure Spacetime Regions}
\author[Francesco Belgiorno]{Francesco Belgiorno}
\address[Francesco Belgiorno]{Dipartimento di Matematica, Politecnico di Milano, Piazza Leonardo da Vinci 32, 20133 Milano, Italy}
\email[]{francesco.belgiorno@polimi.it}
\author[Giovanni Catino]{Giovanni Catino}
\address[Giovanni Catino]{Dipartimento di Matematica, Politecnico di Milano, Piazza Leonardo da Vinci 32, 20133 Milano, Italy}
\email[]{giovanni.catino@polimi.it}
\begin{document}


\begin{abstract} We focus on the Penrose's Weyl Curvature Hypothesis in a general framework encompassing many specific models discussed in literature. We introduce a candidate density for the Weyl entropy in pure spacetime perfect fluid regions and show that it is monotonically increasing in time under very general assumptions. Then we consider the behavior of the Weyl entropy of compact regions, which is shown to be monotone in time as well under suitable hypotheses, and also maximal in correspondence with vacuum static metrics. The minimal entropy case is discussed too.
\end{abstract}

\maketitle

\begin{center}

\noindent{\it Key Words: Weyl entropy, Pure spacetimes, Weyl Curvature Hypothesis}

%

\end{center}


\

\section{Introduction}

Let $(\X,\gb)$ be a four-dimensional smooth connected Lorentzian manifold with signature $(-,+,+,+)$. We will say that $(\X,\gb)$ is a {\em spacetime} if the metric $\gb$ satisfies the Einstein equation
\begin{equation}\label{eq-ein}
\R_{\a\b}-\frac{1}{2} \R\,\gb_{\a\b} = \T_{\a\b}  
\end{equation}
where $\R_{\a\b}$, $\R$ denote the Ricci and the scalar curvature of $\gb$ and $\T_{\a\b}$ is a symmetric two tensor. The tensor $\T$ is referred as the {stress-energy tensor}. When $\T\equiv 0$ we will say $(\X,\gb)$ is a {\em vacuum spacetime} and the Einstein equation reads
\begin{equation*}
\R_{\a\b}-\frac{1}{2} \R\,\gb_{\a\b} = 0\,
\end{equation*}
or equivalently
\begin{equation}\label{eq-einv}
\R_{\a\b} = 0\,. 
\end{equation}
By the standard decomposition of the curvature tensor $\R_{\a\b\g\d}$ of the metric $\gb$, the geometry of a spacetime is completely determined by its {\em Weyl curvature} $\W_{\a\b\g\d}$ (and the stress-energy tensor). More precisely, the Riemann curvature tensor of a spacetime satisfying \eqref{eq-ein} is given by
\begin{equation}\label{eq-riemdecW}
\R_{\a\b\g\d} = \W_{\a\b\g\d}+\frac12\left(\T_{\a\g}\gb_{\b\d}-\T_{\a\d}\gb_{\b\g}+\T_{\b\d}\gb_{\a\g}-\T_{\b\g}\gb_{\a\d}\right)-\frac{\T}{3}\left(\gb_{\a\g}\gb_{\b\d}-\gb_{\a\d}\gb_{\b\g}\right)\,
\end{equation}
where $\T:=\operatorname{trace}(\T)=\gb^{\a\b}\T_{\a\b}$, and thus it is natural to observe all the geometrical/physical properties of the space time arise from the Weyl and the stress-energy tensors.

The original Weyl Curvature Hypothesis by Roger Penrose \cite{pen3} represents a still unproven conjecture about the entropic contribution of the gravitational field to the overall entropy of the Universe. The naive idea can be expressed as follows: according to Penrose, 
the entropy increase associated with the second law of thermodynamics is hardly compatible with the fact that, at the Big Bang, the Universe itself was in a 
uniform state of thermal equilibrium, which means that entropy was maximal. Penrose's hypothesis is that only the matter field degrees of freedom (dof)  were in equilibrium, whereas the dof associated with the gravitational field were not, and could maintain their very low entropy content for a long time, until the formation of galaxies and stars,  because of the weakness of the gravitational interaction. There is a naive suggestion about the possibility to describe qualitatively the entropy of the gravitational dof, 
which consists in distinguishing two different contributions to the Riemann curvature as given in \eqref{eq-riemdecW}: on the one hand, there is the Ricci contribution, 
which is directly determined by the Einstein equation, and is then related to non-gravitational dof; on the other hand, there is a purely gravitational contribution, which is associated with the Weyl tensor. The Weyl Curvature Hypothesis amounts to hypothesizing that the Weyl curvature is zero at the 
initial singularity (Big Bang), to be compared with a divergent Ricci curvature. The hypothesis in itself does not determine a specific expression for the 
gravitational entropy in terms of the Weyl curvature tensor. There is a number of proposals in the physical literature, which take into account specific models, 
mainly of cosmological nature. Several proposals involve ratios between scalar functions of the Weyl tensor and scalar function of other curvature invariants, like e.g. the Ricci tensor as in the original ansatz by Penrose. All proposal are in general applied to specific models. \cite{goowai1, goowai2, goocolwai, pellak, rotann, waiand, gro}.  
One important contribution in this field is represented by \cite{ell}, where five conditions are prescribed for any 
good definition of gravitational entropy. For completeness, we recall them, with the same notation:
\begin{itemize} 
\item[(E1)] it should be nonnegative; 
\item[(E2)] it should vanish only if the Weyl curvature tensor vanishes; 
\item[(E3)] it should measure the local anisotropy of the free gravitational field; 
\item[(E4)] it should reproduce the Bekenstein-Hawking entropy in the black hole case; 
\item[(E5)] it should increase monotonically as structures form in the Universe. 
\end{itemize}
See also the discussion at the end of this section. We sum up our main definition for our entropic function in what follows. We stress that our Weyl entropy $\S$ (as well as $\S^{\bf{pf}}$) is not yet integrated over a region of space, i.e. is the entropy of an infinitesimal volume (with the correct measure).

\begin{figure}\label{f-1}
\centering
\includegraphics[scale=0.7]{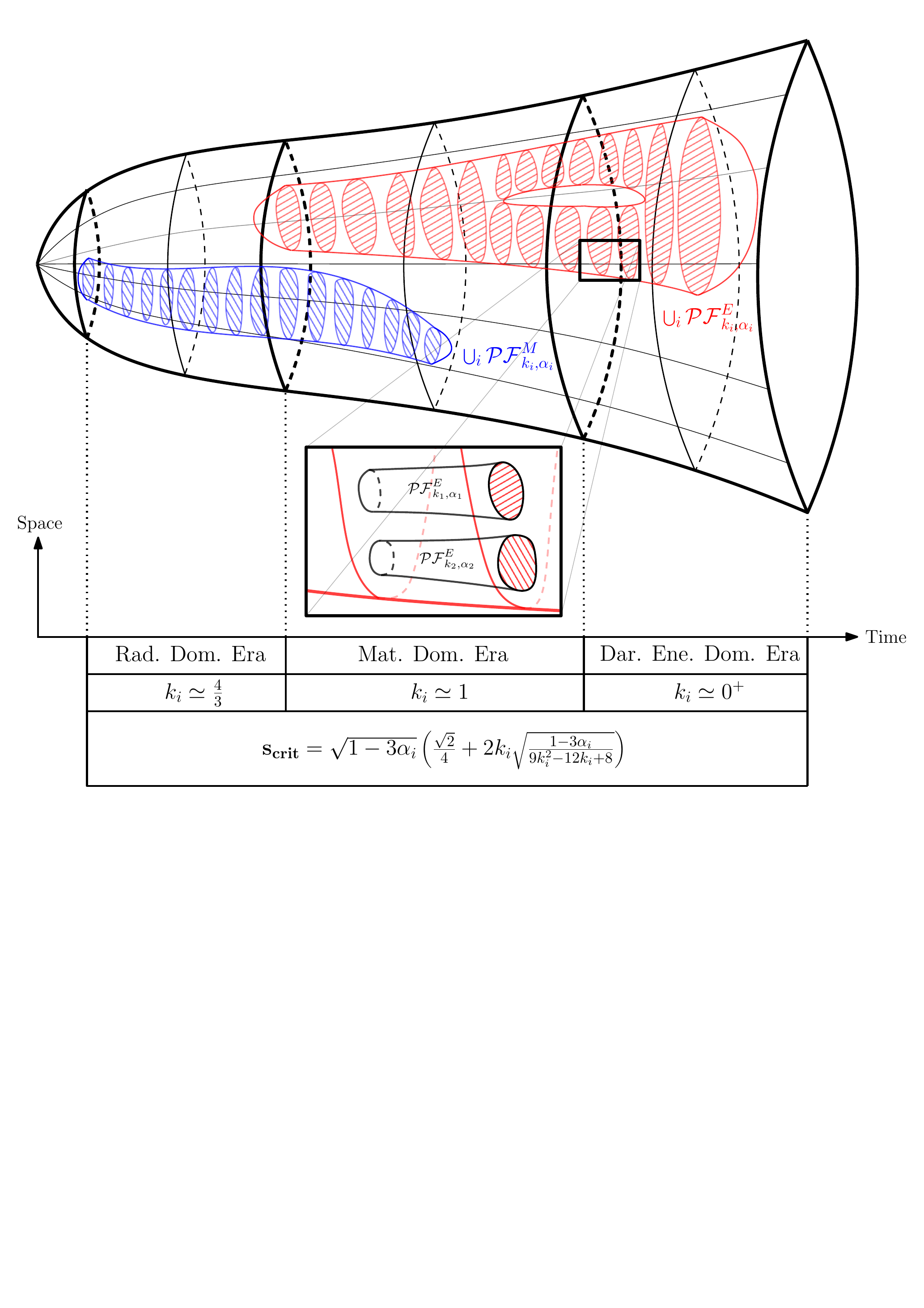}
\caption{A picture of a {\em Weyl Entropic Electric/Magnetic Regions} of spacetime. Each region is given by the union of perfect fluid electric/magnetic regions with given parameters $k_i, \alpha_i$.}
\end{figure}

Consider a globally hyperbolic spacetime  (see Section \ref{s-glo} for details)
$$
(\X,\gb)=(I\times M^3, -N^2 dt^2+g).
$$ 
At given point $(t,x)\in \X$ we define the {\em Weyl entropy} $\S=\S(t,x)$ as
\begin{align*}
\S &:= \frac{|\W_{\a\b\g\d}|_{\gbr}}{|\R_{\a\b\g\d}|_{\gbr}}\sqrt{g}\,,
\end{align*}
where $\R_{\a\b\g\d}\neq 0$ and $\S=1$ where $\R_{\a\b\g\d}=0$. Here $\gbr= N^{2} dt^{2}+g$ is the Riemannian metric associated to $\gb$ and $\sqrt{g}$ denotes the square-root of the determinant of the space metric $g$. In this paper we prove a  monotonicity result for the related Weyl entropy
$$
\S^{\bf{pf}} := \S+ \mathbf{s_{crit}} \sqrt{g}\,,
$$
where $\mathbf{s_{crit}}$ is an explicit nonnegative function depending on the parameters $k,\alpha$ of the perfect fluid electric region $\Pa$. We refer to Section \ref{s-mon} for the precise definitions.

\begin{theorem}\label{t-mone} On every $k$-perfect fluid electric $\alpha$-expanding region $\Pa$ satisfying \eqref{eq-asspar} the Weyl entropy $\S^{\bf{pf}}$ is monotonically increasing in time. 
\end{theorem}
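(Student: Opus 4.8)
The plan is to prove the pointwise inequality $\frac{d}{dt}\S^{\bf{pf}}\ge 0$ along the time flow of the foliation. Writing $w:=|\W_{\a\b\g\d}|_{\gbr}/|\R_{\a\b\g\d}|_{\gbr}$ for the dimensionless curvature ratio entering $\S$, the definition gives
\begin{equation*}
\S^{\bf{pf}}=\big(w+\mathbf{s_{crit}}\big)\sqrt g,\qquad \frac{d}{dt}\S^{\bf{pf}}=\partial_t\big(w+\mathbf{s_{crit}}\big)\,\sqrt g+\big(w+\mathbf{s_{crit}}\big)\,\partial_t\sqrt g .
\end{equation*}
Since $w\ge 0$ and $\mathbf{s_{crit}}\ge 0$, and since $\partial_t\sqrt g$ is controlled by the expansion scalar of the foliation, which is nonnegative on an $\alpha$-expanding region, the second summand is nonnegative. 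Hence it suffices to show that the \emph{dimensionless} scalar $w+\mathbf{s_{crit}}$ is nondecreasing in time, which removes the volume factor from the problem entirely.

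Second, inside a $k$-perfect fluid electric region both curvature norms admit closed forms. I would expand \eqref{eq-riemdecW} in an orthonormal frame adapted to the fluid four-velocity, where the Riemannian norm $|\cdot|_{\gbr}$ differs from the Lorentzian one only by signs on the timelike directions, and compute $|\R_{\a\b\g\d}|^2_{\gbr}$ and $|\W_{\a\b\g\d}|^2_{\gbr}$ directly. For a perfect fluid $\T_{\a\b}=(\rho+p)u_\a u_\b+p\,\gb_{\a\b}$ the Ricci contribution to $|\R_{\a\b\g\d}|^2_{\gbr}$ is a fixed quadratic in $(\rho,p)$, while the electric hypothesis (vanishing magnetic Weyl part) reduces $|\W_{\a\b\g\d}|^2_{\gbr}$ to the invariants of the electric Weyl tensor alone. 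I then expect the $k$- and $\alpha$-constraints in \eqref{eq-asspar} to tie $\rho$, $p$ and the electric Weyl magnitude together, reducing $w$ to an explicit function of a single evolving scalar, with $\mathbf{s_{crit}}$ chosen precisely as the nonnegative function that completes $w$ to a nondecreasing one.

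Third, I would differentiate $w$ using the evolution system that closes on such a region: the Raychaudhuri equation for the expansion $\theta$, the evolution of the shear, the conservation law for $(\rho,p)$ coming from $\nabla^\a\T_{\a\b}=0$ together with \eqref{eq-ein}, and the Bianchi-identity evolution equation for the electric Weyl tensor, which forms a closed system precisely because the magnetic part vanishes. This produces $\partial_t w$ as an explicit rational expression in the same scalars, and I expect its possibly negative contributions to be exactly of the form generated by $\partial_t\mathbf{s_{crit}}$, so that after inserting the equation of state and the expansion constraint of \eqref{eq-asspar} the inequality $\partial_t(w+\mathbf{s_{crit}})\ge 0$ collapses to a relation among the parameters $k,\alpha$.

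The main obstacle is this last step: deriving a usable closed evolution equation for $|\W_{\a\b\g\d}|_{\gbr}$, i.e.\ for the electric Weyl magnitude, and then verifying that $\mathbf{s_{crit}}$ compensates its negative part for \emph{every} admissible pair $(k,\alpha)$. This is exactly where the precise definition of $\mathbf{s_{crit}}$ and the quantitative content of \eqref{eq-asspar} become indispensable, and where careful bookkeeping of the contracted Bianchi identities, rather than any soft monotonicity argument, appears unavoidable.
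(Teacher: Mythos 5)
There is a genuine gap, and it sits in your very first reduction. You discard the term $\big(w+\mathbf{s_{crit}}\big)\,\partial_t\sqrt g$ as ``nonnegative, hence ignorable'' and reduce the theorem to showing that the dimensionless scalar $w+\mathbf{s_{crit}}$ is nondecreasing. That is a strictly stronger statement than the theorem, and it is precisely the one that fails (or at least is not obtainable): the sharp pointwise bound one can extract from the evolution equations on $\Pa$ is $\D_T\S\geq H\,\mathbf{s_{crit}}\sqrt g$, which is \emph{negative} whenever $H<0$ and $\alpha<\frac13$; equivalently $\D_T w\geq (w+\mathbf{s_{crit}})H$, with no better sign. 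The whole point of the factor $\sqrt g$ in the definition of $\S^{\bf{pf}}$ is that the volume-growth term $\mathbf{s_{crit}}\,\D_T\sqrt g=-H\,\mathbf{s_{crit}}\sqrt g\geq 0$ exactly absorbs this negative part. Your related expectation that the negative contributions of $\partial_t w$ are ``exactly of the form generated by $\partial_t\mathbf{s_{crit}}$'' is also off: under \eqref{eq-asspar} one only gets $\D_T\mathbf{s_{crit}}\geq 0$, and for constant $k,\alpha$ (the standard case, where \eqref{eq-asspar} holds trivially) one has $\D_T\mathbf{s_{crit}}=0$, so that term can compensate nothing. With the volume factor thrown away, your target inequality $\partial_t(w+\mathbf{s_{crit}})\geq 0$ has no source of positivity left to balance the shear term.

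The second gap is that the quantitative core is not carried out, and you say so yourself (``the main obstacle is this last step''). The actual work consists of: (i) the Bianchi-type evolution of $|\W|^2$ on a pure electric region, $\tfrac12\D_T|\W|^2=16H|\W_{TiTj}|^2-24h_{jl}\W_{TiTj}\W_{TiTl}+4kM h_{ij}\W_{TiTj}$; (ii) estimating the shear terms via the $\alpha$-expansion, $-24\mathring h_{jl}\W_{TiTj}\W_{TiTl}\geq(1-3\alpha)H|\W|^2$ and $|\mathring h|\leq-\sqrt{2/3}\,(1-3\alpha)H$; and (iii) a Young-inequality balancing of the resulting cubic expression $|\W|^3-(1-3\alpha)|\W||\A|^2-\tfrac{2k(1-3\alpha)}{\sqrt{9k^2-12k+8}}|\A|^3$ against $|\R|_{\gbr}^3$, which is what produces the specific constant $\mathbf{s_{crit}}$. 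Also, $|\W|$ and $M$ are \emph{not} tied into a single evolving scalar by \eqref{eq-asspar} as you hope; they remain independent, and step (iii) is exactly how the two-variable competition is resolved. Without (i)--(iii), and with the volume factor removed, the proposal does not establish the theorem.
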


We have a similar result for magnetic regions.

\begin{theorem}\label{t-monm} On every $k$-perfect fluid magnetic $\alpha$-expanding region $\Pma$ the Weyl entropy $\S$ is monotonically increasing in time.
\end{theorem}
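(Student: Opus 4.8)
The plan is to mirror the proof of the electric Theorem~\ref{t-mone}, exploiting the fact that on a magnetic region the electric Weyl tensor $\E$ vanishes identically; this is precisely what removes the obstruction that forced the correction term $\mathbf{s_{crit}}$ in the electric case and allows the bare entropy $\S$ to be monotone. First I would make $\S$ explicit on $\Pma$. In an orthonormal frame adapted to the fluid velocity, the $1+3$ decomposition of the Weyl tensor into its electric and magnetic parts gives, for the Riemannian norm associated to $\gbr=N^2dt^2+g$, a relation of the form $|\W|_{\gbr}=c\,|\H|$ with $c$ an absolute constant, since $\E\equiv 0$. For the Riemann norm I would use the decomposition \eqref{eq-riemdecW}: as $\W$ is trace-free while the remaining terms form the Kulkarni--Nomizu part $\gb\kn\T$, the two pieces do not interfere and $|\R|^2_{\gbr}=|\W|^2_{\gbr}+P^2$, where $P^2$ is a pure algebraic function of the perfect-fluid scalars (energy density and pressure), hence of the parameters $k,\alpha$.

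This yields the compact expression
\[
\S=\frac{|\H|}{\sqrt{|\H|^2+c^{-2}P^2}}\,\sqrt{g},
\]
so that $\S$ is the product of the volume factor $\sqrt{g}$ with a strictly decreasing function of the ratio $P/|\H|$. The second step is to obtain the time evolution of the three ingredients $|\H|$, $P$ and $\sqrt{g}$. For the volume I would use $\partial_t\sqrt{g}=N\theta\sqrt{g}$ with $\theta>0$ the positive expansion of the $\alpha$-expanding region; for $P$ I would use the conservation law $\nabla^\a\T_{\a\b}=0$ specialized to a $k$-perfect fluid, which dictates how the matter scalars dilute; and for $|\H|$ I would use the transport equation for the magnetic Weyl tensor coming from the uncontracted Bianchi identities.

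The third step is to substitute these evolution laws into $\tfrac{d}{dt}\S$ and verify the sign. I expect the derivative to split into a manifestly nonnegative volume contribution $N\theta\sqrt{g}\,(\cdots)$ and a ratio contribution governed by $\tfrac{d}{dt}(P/|\H|)$, and the $\alpha$-expanding hypothesis to make the latter favorable---exactly because the $\E$-driven source that spoiled monotonicity in the electric setting is now absent. Reducing everything to a single scalar inequality in the scale factor and its expansion rate, one then checks $\tfrac{d}{dt}\S\ge 0$ directly.

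The main obstacle I anticipate lies in controlling $|\H|$: unlike the scalar matter quantities, the magnetic Weyl tensor obeys a tensorial transport equation whose sources couple back to $\T$ and to the shear, so its evolution is not prescribed by the expansion alone. The crux is therefore to show that along the $\alpha$-expanding flow, and under the $k$-perfect-fluid constraint, the ratio $P/|\H|$ is nonincreasing, so that the ratio factor and the volume factor $\sqrt{g}$ in $\S$ reinforce rather than oppose one another. Once this ratio estimate is established---its validity being strongly suggested by the absence of the $\mathbf{s_{crit}}$ correction---the inequality $\tfrac{d}{dt}\S\ge0$ follows at once.
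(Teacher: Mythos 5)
Your overall strategy coincides with the paper's: write $\S=\mathbf{s}\sqrt g$ with $\mathbf{s}=|\W_{\a\b\g\d}|_{\gbr}/|\R_{\a\b\g\d}|_{\gbr}$, use that $\E\equiv0$ gives $|\W_{\a\b\g\d}|^2_{\gbr}=4|\W_{Tijk}|^2=\tfrac14|\H_{\a\b}|^2$ and that $|\R_{\a\b\g\d}|^2_{\gbr}=|\W_{\a\b\g\d}|^2_{\gbr}+|\A_{\a\b\g\d}|^2$ with $|\A_{\a\b\g\d}|^2=\tfrac{9k^2-12k+8}{3}M^2$, evolve the three ingredients separately, and combine (this is Proposition \ref{c-entestM} and Theorem \ref{teo-mainM}). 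However, your outline stops precisely where the actual proof lives. The decisive step is the transport equation for the magnetic part, which you only name (``the transport equation \dots coming from the uncontracted Bianchi identities'') and whose favorable sign you justify as ``strongly suggested by the absence of the $\mathbf{s_{crit}}$ correction.'' That is circular: the absence of the correction term is a \emph{consequence} of this computation, not evidence for it. The paper's Lemma \ref{l-evM} carries it out: from the second Bianchi identity \eqref{eq-sbinv} one gets $\D_T\W_{Tijk}=\D_j\W_{TiTk}-\D_k\W_{TiTj}$ plus Cotton terms; for a perfect fluid the relevant components $\C_{TkT}$ and $\C_{ijk}$ vanish (using \eqref{eq-div0}, which forces $\nabla_jP=0$, cf.\ \eqref{eq-P}), and under \eqref{H2} the surviving covariant derivatives reduce to pure Christoffel contributions, yielding
$$
\tfrac12\,\D_T|\W_{\a\b\g\d}|^2=-16\,h_{kp}\W_{Tijk}\W_{Tijp}\,.
$$
Only at this point does the $\alpha$-expanding hypothesis $h_{ij}\le\alpha Hg_{ij}\le0$, combined with the positive semidefiniteness of $\W_{Tijk}\W_{Tijp}$ in the indices $(k,p)$, control the sign; and only together with the constraint $0\le k'\le\frac{k(9k^2-12k+8)}{3(4-3k)}$ --- which is built into Definition \ref{d-pfm} and which you never invoke --- does the matter term $\D_T|\A_{\a\b\g\d}|$ come out with the right sign when $k$ varies. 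Without these two inputs your ``single scalar inequality'' cannot be checked.

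A secondary point: your intermediate picture of the mechanism is off, even though the conclusion you aim at is right. The identity above shows that $|\W_{Tijk}|^2$, hence $|\H_{\a\b}|$, is \emph{nonincreasing} along the flow, so monotonicity of $\S$ is not obtained because the numerator grows; it is a quantitative race between the decay of $|\H_{\a\b}|$, the decay of $M$ dictated by \eqref{eq-evmass}, and the growth of $\sqrt g$ from \eqref{eq-evdet}, and it is exactly the pointwise bound $h_{kp}\W_{Tijk}\W_{Tijp}\le\alpha H|\W_{Tijk}|^2$ that makes the race come out favorably. As it stands, the proposal is a correct plan with the central lemma missing, so it does not yet constitute a proof.
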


We will call a {\em Weyl Entropic Electric/Magnetic Region} of spacetime the union of $k$-perfect fluid electric/magnetic $\alpha$-expanding regions with different parameters $k_i,\alpha_i$ (see Figure 1). By Theorems \ref{t-mone} e \ref{t-monm}, in these regions the Weyl entropy in increasing in time (E5). We also refer to the aforementioned regions as `pure spacetime regions'.

\medskip

If the purely electric/magnetic character of the manifold is missing, there is no clear way to obtain a sensible increasing function of entropic type. 
It is not yet known how to study a mixed situation where the manifold is neither purely electric nor purely magnetic.

\medskip

The parameter $k$ relates the pressure $P$ and the mass density $M$ of the fluid through the equation of state
$$
P=(k-1)M
$$ 
and is standard in cosmology, where it is a constant labelling different epochs in the Universe evolution (see Figure 1).  In the present picture, it is possible to 
allow for a dependence on time and space. A dependence on space-time coordinates can be allowed also for the further parameter $\alpha$, which measures the expansion and the homogeneity in space of the Universe (see Section \ref{s-mon} for the precise definitions).

%
%
%
%
%
%
%
%
%
%
%
%
%

\medskip

From a physical point of view, there is not yet an unique way for identifying a gravitational entropy $\S$, lacking a universally accepted and definitive quantum gravity theory. Notwithstanding, as discussed in \cite{ell}, one can provide in different ways an ansatz for a putative entropy function for the gravitational field. In particular, the ratio involving the (Riemannian) modulus of the Weyl tensor and the (Riemannian) modulus of the Riemann tensor is interesting because it represents the relative weight of the Weyl curvature contribution with respect to the overall Riemannian curvature contribution, i.e. of the purely gravitational contribution to the curvature with respect to the sum of the gravitational and the matter field contributions, and is a pure number contained in the interval $[0,1]$. There is not yet a specific statistical mechanical suggestion for such a specific ansatz, whose reliability can be judged only a posteriori. Furthermore, the density $\S$ is only an ingredient of the physically relevant definition. It is {\sl a priori} not clear what definition for the 
physical entropy one should assume, as there is not yet a statistical mechanical framework for gravitational dof, and, on the other hand, an information theory  inspired formula or even a thermodynamically inspired formula are difficult to be implemented. Clifton's et al. attempt \cite{ell} belongs to the second framework, and requires to define a temperature field associated with every manifold which is taken into account. Even if this is a viable suggestion, 
it necessarily requires a (generally non-equilibrium) thermodynamics framework which is an ansatz again. Our choice is to ground our entropy candidate to 
the aforementioned density $\S$, and a first step is represented by an averaging procedure over a space region $U$, which is common to other 
definitions \cite{marozzi} and produces what we could call $S^{av}_U\in[0,1]$. Furthermore, 
we also postulate that our gravitational entropy is associated with the maximal entropy which can be associated with a region of space $U$. Such an entropy should be the black hole entropy. This idea is in turn related with the Bekenstein bound \cite{bek-bound} (see also the review \cite{bousso-rev}). The naive ratio beyond this choice is the following: we can obtain what we could call `normalized entropy' $S_n$, with 
$S_n\in [0,1]$ as the ratio between $S$ and $\max (S)$ (cf. also \cite{rodewald}). The same could be possible with an averaged entropy. Our further postulate is to assume that 
in the gravitational case one may choose $\max (S)$ of a spatial region $U$ bounded by a surface $\Sigma$ as the area $A(\Sigma)$ of that surface (see Section \ref{s-weyent}). To be precise, for a  compact space domain $U\in M^3$ (with two dimensional boundary $\Sigma:=\partial U$), we define the {\em Weyl entropy in $U$} as the averaged integral
$$
\S_{U}:=\frac{\text{Area}(\Sigma)}{\operatorname{Vol}_{g}(U)}\int_{U}\S=\frac{\text{Area}(\Sigma)}{\operatorname{Vol}_{g}(U)}\int_{U}\frac{|\W_{\a\b\g\d}|_{\gbr}}{|\R_{\a\b\g\d}|_{\gbr}}\sqrt{g},
$$
where $\operatorname{Vol}_{g}(U)=\int_{U}\sqrt{g}$ and $\text{Area}(\Sigma)$ is the area of the boundary. Analogously, we define $\S^{\mathbf{pf}}_U$. As a consequence of Theorems \ref{t-mone} e \ref{t-monm} we can show the following (see also Section \ref{s-max}):

\begin{theorem} On every $k$-perfect fluid electric $\alpha$-expanding region $\Pa$ satisfying \eqref{eq-asspar} and \eqref{eq-assU} the Weyl entropy in $U$ $\S^{\bf{pf}}_U$ is monotonically increasing in time. Moreover, maximal Weyl entropy in $U$ at time $t_0$ can only occur in a {\em static vacuum} spacetime region $[t_0,T)\times U$.
\end{theorem}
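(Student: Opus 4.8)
The plan is to derive the integrated monotonicity from the pointwise statement of Theorem~\ref{t-mone} together with a geometric estimate on the normalizing factor, and then to read off the rigidity from the equality cases. Throughout I keep the region $U\subset M^3$ fixed in the spatial coordinates of the foliation, so that only the metric $g=g(t)$, the lapse $N=N(t)$ and the curvature tensors carry the time dependence. Writing
\[
\S^{\bf{pf}}_U(t)=\frac{\text{Area}(\Sigma)}{\operatorname{Vol}_g(U)}\int_U\S^{\bf{pf}},\qquad f:=\frac{|\W_{\a\b\g\d}|_{\gbr}}{|\R_{\a\b\g\d}|_{\gbr}}\in[0,1],
\]
so that $\S^{\bf{pf}}=(f+\mathbf{s_{crit}})\sqrt{g}$ is a density already carrying the measure, I differentiate the product to obtain
\[
\frac{d}{dt}\S^{\bf{pf}}_U=\Big(\frac{d}{dt}\log\frac{\text{Area}(\Sigma)}{\operatorname{Vol}_g(U)}\Big)\,\S^{\bf{pf}}_U+\frac{\text{Area}(\Sigma)}{\operatorname{Vol}_g(U)}\int_U\partial_t\S^{\bf{pf}}.
\]
The second term is nonnegative: Theorem~\ref{t-mone} gives $\partial_t\S^{\bf{pf}}\ge0$ at every point of $\Pa$, hence $\int_U\partial_t\S^{\bf{pf}}\ge0$ on the fixed coordinate region.

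The remaining work is to control the first term. In an $\alpha$-expanding region both the area of $\Sigma$ and the volume of $U$ grow, and typically the ratio $\text{Area}(\Sigma)/\operatorname{Vol}_g(U)$ decreases, so this term may be negative. This is exactly where hypothesis~\eqref{eq-assU} enters: I would use it to bound the (negative) logarithmic derivative of the normalizing factor from below by the normalized rate of increase of the entropy integral, so that the first term is dominated by the second and $\frac{d}{dt}\S^{\bf{pf}}_U\ge0$. Concretely, \eqref{eq-assU} should be read as the statement that the expansion of the domain is slow enough, relative to the production of Weyl entropy guaranteed by Theorem~\ref{t-mone}, to keep the product monotone.

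For the rigidity, suppose the Weyl entropy in $U$ is maximal at some time $t_0$. Since $\S^{\bf{pf}}_U$ is nondecreasing on $[t_0,T)$ and attains its maximum at $t_0$, it must be constant there, i.e. $\frac{d}{dt}\S^{\bf{pf}}_U\equiv0$ on $[t_0,T)$. Both nonnegative contributions in the displayed derivative must then vanish identically: the integral term forces equality in the pointwise inequality of Theorem~\ref{t-mone}, that is $\partial_t\S^{\bf{pf}}\equiv0$ on $[t_0,T)\times U$, while the geometric term forces saturation of the estimate extracted from \eqref{eq-assU}. Moreover, maximality of the entropy in $U$ means that the spatial average of $f$ attains its upper bound---recall that $\S_U=\text{Area}(\Sigma)$ precisely when $f\equiv1$ on $U$---so $f\equiv1$, i.e. $|\W_{\a\b\g\d}|_{\gbr}=|\R_{\a\b\g\d}|_{\gbr}$. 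By the orthogonal decomposition of the Riemann tensor into its Weyl and non-Weyl parts this forces the non-Weyl part in \eqref{eq-riemdecW} to vanish; since that part is a linear expression in $\T$ and its trace, it vanishes only for $\T\equiv0$, so the region is vacuum. Combining $f\equiv1$ with the vanishing of $\partial_t\S^{\bf{pf}}$ and the saturation of \eqref{eq-assU} should finally pin the spatial metric, $\partial_t g\equiv0$, yielding an \emph{static} region, so that $[t_0,T)\times U$ is static vacuum.

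The hard part will be the rigidity, specifically disentangling ``static'' from ``vacuum'' out of the single scalar identity $\frac{d}{dt}\S^{\bf{pf}}_U=0$. The pointwise monotonicity of Theorem~\ref{t-mone} is the saturation of a differential inequality whose equality case must be shown to be equivalent to the simultaneous vanishing of the expansion and shear of the fluid (giving $\partial_t g\equiv0$) and of the stress-energy $\T$ (giving vacuum), rather than to some weaker degenerate configuration---for instance a nonstatic self-similar expansion that nonetheless keeps the Weyl ratio maximal. Checking that the pointwise equality case of Theorem~\ref{t-mone} is compatible with, and indeed implies, the integrated saturation of \eqref{eq-assU}, and that together they leave only the static vacuum solution, is where the genuine difficulty lies.
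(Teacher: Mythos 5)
Your monotonicity argument is essentially the paper's: differentiate the product, use the pointwise inequality $\D_T\S^{\bf{pf}}\ge 0$ from Theorem \ref{t-mone} for the integral term, and use \eqref{eq-assU} to control the normalizing factor. One discrepancy: you read \eqref{eq-assU} as a delicate balance condition bounding the negative logarithmic derivative of $\mathrm{Area}(\Sigma)/\operatorname{Vol}_{g}(U)$ by the entropy production, whereas the actual hypothesis is simply $\D_T\bigl(\mathrm{Area}(\Sigma)/\operatorname{Vol}_{g}(U)\bigr)\ge 0$, so that both terms in the derivative are separately nonnegative. This matters for the second half: it is precisely this separate nonnegativity that lets one conclude, once $\S^{\bf{pf}}_U$ is constant on $[t_0,T)$, that $\int_U\D_T\S^{\bf{pf}}=0$ and hence $\D_T\S^{\bf{pf}}\equiv 0$ pointwise on $U$.

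The genuine gap is in the rigidity. You correctly extract the vacuum conclusion from saturation of the bound $f\le 1$ (though you should also track the $\mathbf{s_{crit}}$ contribution: the sharp bound is $\S^{\bf{pf}}_U\le\mathrm{Area}(\Sigma)\bigl(1+\sup_U\mathbf{s_{crit}}\bigr)$, and equality forces both $\A=0$ and $\mathbf{s_{crit}}$ constant on $U$). But you explicitly leave staticity unproved (``should finally pin the spatial metric''), and the scalar identity $\frac{d}{dt}\S^{\bf{pf}}_U=0$ alone does not deliver it. The missing ingredient is the equality case of the pointwise monotonicity theorem (Theorem \ref{t-mon} in the body of the paper): $\D_T\S^{\bf{pf}}=0$ at a point forces $\D_T\mathbf{s_{crit}}=0$ and either $h=0$, or $|\W|=0$ with $\alpha=\tfrac13$. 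In the first alternative $\p_t g_{ij}=-2Nh_{ij}=0$, and substituting $h=0$ and $\T=0$ into \eqref{eq-ricvac} yields the static vacuum system $N R_{ij}=\nabla_i\nabla_j N$, $\Delta N=0$ on $U$; in the second alternative, $\W=0$ together with $\R_{\a\b}=0$ forces the region to be flat, hence again static vacuum. Your worry about a ``nonstatic self-similar expansion that keeps the Weyl ratio maximal'' is exactly what this dichotomy excludes; without invoking that equality case the argument is incomplete.
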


\begin{theorem} On every $k$-perfect fluid magnetic $\alpha$-expanding region $\Pa$ satisfying \eqref{eq-assU} the Weyl entropy in $U$ $\S_U$ is monotonically increasing in time. Moreover, maximal Weyl entropy in $U$ at time $t_0$ can only occur in a {\em flat (vacuum)} spacetime region $[t_0,T)\times U$.
\end{theorem}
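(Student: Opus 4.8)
The plan is to combine the pointwise monotonicity of Theorem~\ref{t-monm} with the elementary orthogonal decomposition underlying \eqref{eq-riemdecW}. First I would record the pointwise inequality $|\W_{\a\b\g\d}|_{\gbr}\le|\R_{\a\b\g\d}|_{\gbr}$: since the Weyl tensor is totally trace-free while the remaining terms in \eqref{eq-riemdecW} are built from $\T$ and $\gb$, the two pieces are orthogonal in the (positive-definite) inner product induced by $\gbr$, so that $|\R_{\a\b\g\d}|_{\gbr}^2$ splits as $|\W_{\a\b\g\d}|_{\gbr}^2$ plus the squared norm of the stress-energy part. Hence the dimensionless ratio $|\W_{\a\b\g\d}|_{\gbr}/|\R_{\a\b\g\d}|_{\gbr}$ lies in $[0,1]$ and equals $1$ exactly in vacuum ($\T\equiv0$), the flat case being included by definition. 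Integrating against $dV_g=\sqrt{g}\,d^3x$ gives at once
\[
\S_U=\frac{\operatorname{Area}(\Sigma)}{\operatorname{Vol}_g(U)}\int_U\frac{|\W_{\a\b\g\d}|_{\gbr}}{|\R_{\a\b\g\d}|_{\gbr}}\,dV_g\ \le\ \operatorname{Area}(\Sigma),
\]
so $\operatorname{Area}(\Sigma)$ is the candidate maximal value, in agreement with the Bekenstein--Hawking prescription (E4), and it is attained if and only if the slice $\{t\}\times U$ is vacuum.

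For the monotonicity I would write $\S_U=A(t)\,\bar{\S}(t)$, where $A(t)=\operatorname{Area}(\Sigma)$ and $\bar{\S}(t)=\operatorname{Vol}_g(U)^{-1}\int_U(|\W_{\a\b\g\d}|_{\gbr}/|\R_{\a\b\g\d}|_{\gbr})\,dV_g\in[0,1]$ is the volume average of the dimensionless ratio. On a fixed coordinate domain $U$ Theorem~\ref{t-monm} gives $\partial_t\S\ge0$, hence $\frac{d}{dt}\int_U\S\,d^3x\ge0$; the area $A(t)$ grows under the $\alpha$-expansion, while the only competing term is the variation of the isoperimetric factor hidden in $\bar\S$. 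Differentiating $\log\S_U$ and inserting the first-variation formulas for area and volume (through the lapse $N$ and the second fundamental form of the slices), assumption \eqref{eq-assU} is precisely the condition ensuring that the competition resolves in favour of growth, i.e. $\frac{d}{dt}\S_U\ge0$. I expect this bookkeeping to be routine once \eqref{eq-assU} is imposed.

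The substantive part is the equality analysis. If $\S_U$ is maximal at $t_0$ the bound above is saturated, forcing $|\W_{\a\b\g\d}|_{\gbr}=|\R_{\a\b\g\d}|_{\gbr}$ a.e. on $\{t_0\}\times U$, hence the stress-energy part of \eqref{eq-riemdecW} vanishes and $\T\equiv0$ there; for a $k$-perfect fluid, with $P=(k-1)M$, this means the mass density satisfies $M(t_0,\cdot)\equiv0$. I would then propagate vacuum forward by the continuity equation coming from $\nabla^\a\T_{\a\b}=0$, which for the fluid takes the linear homogeneous form $\dot M+kM\theta=0$ along the flow; since $M$ vanishes on the initial slice it vanishes on all of $[t_0,T)\times U$, so the whole region is vacuum.

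It remains to upgrade vacuum to flatness, and this is where the magnetic hypothesis is decisive and where I expect the main obstacle to lie. In a purely magnetic region the electric part of the Weyl tensor vanishes identically, and a purely magnetic \emph{vacuum} Weyl tensor is rigid: the second Bianchi identity together with the structure equations of the magnetic region force the Weyl tensor itself to vanish. Combined with $\T\equiv0$ and the decomposition \eqref{eq-riemdecW} this yields $\R_{\a\b\g\d}\equiv0$ on $[t_0,T)\times U$, i.e. the region is flat. This rigidity is exactly why the conclusion here is flatness, strictly stronger than the static-vacuum statement of the electric case, in which purely electric vacuum solutions (of Schwarzschild type) are generally non-flat.
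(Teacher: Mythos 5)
Your first three steps track the paper's argument. The bound $\S_U\le \text{Area}(\Sigma)$ with equality precisely on vacuum slices is exactly the paper's starting point; it rests on the identity $|\R_{\a\b\g\d}|^2_{\gbr}=|\W_{\a\b\g\d}|^2_{\gbr}+|\A_{\a\b\g\d}|^2_{\gbr}$ of Lemma \ref{l-for} (your ``orthogonality'' justification is a little quick, since $\gb$-trace-freeness of $\W$ does not by itself give orthogonality in the positive-definite $\gbr$-inner product --- the paper proves the identity componentwise --- but the identity you need is available). The monotonicity is also as in the paper, and in fact simpler than you make it: writing $\D_T\S_U=\frac{\text{Area}(\Sigma)}{\operatorname{Vol}_g(U)}\int_U\D_T\S+\D_T\big(\text{Area}(\Sigma)/\operatorname{Vol}_g(U)\big)\int_U\S$, the first term is nonnegative by the pointwise Theorem \ref{t-monm} and the second is nonnegative because \eqref{eq-assU} \emph{is} the statement $\D_T\big(\text{Area}(\Sigma)/\operatorname{Vol}_g(U)\big)\ge0$; there is no ``competition'' to resolve. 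Your forward propagation of vacuum via the linear homogeneous equation $\D_T M=kHM$ is a legitimate (and clean) alternative to the paper's propagation via monotonicity of $\S_U$.

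The genuine gap is the final step, vacuum $\Rightarrow$ flat. You invoke a rigidity statement --- that a purely magnetic vacuum Weyl tensor must vanish by ``the second Bianchi identity together with the structure equations'' --- without proof. As stated this is essentially the purely magnetic vacuum conjecture, which in general is a well-known open problem (established only under additional hypotheses), so it cannot be waved through, and you yourself flag it as the main obstacle. The paper avoids it entirely by using the equality case of its own monotonicity theorem: once the region is vacuum and $\S_U$ sits at its maximum for all $t\ge t_0$, one gets $\D_T\S=0$ pointwise, and the equality case of Theorem \ref{teo-mainM} forces, at each point, either $h=0$ or $\W=0$. Where $h=0$ the data are static and vacuum, so \eqref{eq-g2} gives $\W_{Tijk}=\nabla_jh_{ik}-\nabla_kh_{ij}=0$, i.e. the magnetic part vanishes; since the region satisfies \eqref{H2} the electric part vanishes as well, hence $\W=0$ there too. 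Either way $\W\equiv0$ on the region, and together with $\R_{\a\b}=0$ the decomposition \eqref{eq-riemdecW} yields $\R_{\a\b\g\d}\equiv0$, i.e. flatness. Some such concrete use of the equality case of the pointwise monotonicity (or an equivalent substitute) is needed; the general magnetic--vacuum rigidity you appeal to is not available.
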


As to the list of properties stated in \cite{ell} and summarized above, (E1) is trivially satisfied by both $\S$ and $\S^{\bf{pf}}$. 

(E2) is automatically satisfied by $\S$
but in the case of $\S^{\bf{pf}}$ one must also require $\mathbf{s_{crit}}$, i.e. $\alpha=\frac{1}{3}$. Moreover, we can show that minimal Weyl entropy in $U$ $\S^{\mathbf{pf}}_U$ can only occur in a FLRW spacetime region $[0,T]\times U$ (see Section \ref{s-max}). We will avoid this unphysical situation assuming that the spacetime is non-homogeneous for $t>0$.

As to (E3), local anisotropy implies the presence of a privileged direction in the orthogonal direction with respect to a congruence of timelike curves with unit tangent vectors. In other terms, a space section is isotropic at $x$ if there is no privileged direction in the tangent space at $x$. This is possible only for constant spatial curvature. As a consequence, any deviation from this condition ensures local anisotropy, which is easily implemented in our picture. 

Property (E4) in line of  principle would seem to be automatically satisfied e.g. for static black holes in vacuum. Actually, this identification would not be correct on the grounds of our hypothesis of expanding region, which, as a black hole is expected to form in a collapse process, is hardly compatible with the forming of a black hole itself. 
Still, it can be possible to justify the entropy of a cosmological horizon (if any), which is again of the Bekenstein-Hawking type, i.e. satisfies the 
well-known area law. In this sense, our claim is that our definition can satisfy the area law at least for cosmological horizons. In other terms, 
our constructive ansatz allows us to match by construction and with a suitable choice of a proportionality constant condition (E4) of \cite{ell}.

As a consequence of the previous results, property (E5) is satisfied in pure spacetime regions (i.e. purely electric/magnetic regions).

\medskip

There are two further considerations which appear to be relevant to the present discussion. In agreement with the maximal character of entropy 
in equilibrium, we find that entropy is maximal for static vacuum solutions, i.e. in the case of metrics which can be considered equilibrium states of the geometry, 
which remain static (no further evolution). 
We think that it could be suitable introducing a further condition (E6): entropy is maximal in the case of static solutions in vacuum. 

\medskip

To conclude, it is worthwhile to observe that all aforementioned properties, apart for (E4), hold true also for the Weyl entropy in $U$ simply defined by
$$
\int_{U}\S=\int_{U}\frac{|\W_{\a\b\g\d}|_{\gbr}}{|\R_{\a\b\g\d}|_{\gbr}}\sqrt{g}.
$$

\

\section{Preliminaries on the Curvature of  Spacetimes}

Let $(\X,\gb)$ be a spacetime. We will denote by $\D$ the covariant derivative with respect to $\gb$ and $\R_{\a\b\g\d}$, $\a,\b,\g,\d=0,1,2,3$, its curvature tensor. We recall the following decomposition of the curvature tensor (see \cite{bes, catmas})
$$
\R_{\a\b\g\d} = \W_{\a\b\g\d}+\frac12\left(\R_{\a\g}\gb_{\b\d}-\R_{\a\d}\gb_{\b\g}+\R_{\b\d}\gb_{\a\g}-\R_{\b\g}\gb_{\a\d}\right)-\frac{\R}{6}\left(\gb_{\a\g}\gb_{\b\d}-\gb_{\a\d}\gb_{\b\g}\right)
$$
where $\W_{\a\b\g\d}$, $\R_{\a\b}$ and $\R$ denote the Weyl, Ricci and scalar curvature of $\gb$, respectively. The Einstein equation of spacetime states that
$$
\R_{\a\b}-\frac{1}{2} \R\,\gb_{\a\b} = \T_{\a\b}\,.
$$
Tracing the equation we obtain
$$
\R = - \T
$$
where $\T=\gb^{\a\b}\T_{\a\b}$ is the trace of the stress-energy tensor. Thus
\begin{equation}\label{eq-einnv}
\R_{\a\b} = \T_{\a\b}-\frac12 \T\gb_{\a\b}\,.
\end{equation}
Therefore, one has
\begin{equation}\label{eq-rm}
\R_{\a\b\g\d} = \W_{\a\b\g\d}+\frac12\left(\T_{\a\g}\gb_{\b\d}-\T_{\a\d}\gb_{\b\g}+\T_{\b\d}\gb_{\a\g}-\T_{\b\g}\gb_{\a\d}\right)-\frac{\T}{3}\left(\gb_{\a\g}\gb_{\b\d}-\gb_{\a\d}\gb_{\b\g}\right)
\end{equation}
From the Einstein equation we have that the stress-energy tensor is diverge free, i.e.
\begin{equation}\label{eq-div0}
\D_{\a} \T_{\a\b} = 0 \,.
\end{equation}
We will use the usual notation to denote the norm of a $(0,r)$-tensor $\mathbf{P}$ with respect to the spacetime metric $\gb$, namely
$$
|\mathbf{P}|^2 := \gb^{i_{1}m_{1}}\cdots \gb^{i_{r}m_{r}} \mathbf{P}
_{i_{1}\dots i_{r}} \mathbf{P}_{m_{1}\dots m_{r}}\,.
$$
Note that one has
$$
|\R_{\a\b}|^{2}=|\T_{\a\b}|^{2}\,,
$$
and
\begin{align}\label{rm2}
|\R_{\a\b\g\d}|^{2}&=|\W_{\a\b\g\d}|^{2}+2|\R_{\a\b}|^{2}-\frac13 \R^{2} = |\W_{\a\b\g\d}|^{2} + 2|\T_{\a\b}|^{2}-\frac13\T^{2}\\\nonumber
&= |\W_{\a\b\g\d}|^{2} + |\A_{\a\b\g\d}|^2\,,
\end{align}
where 
\begin{equation}\label{eq-A}
|\A_{\a\b\g\d}|^2 = 2|\T_{\a\b}|^{2}-\frac13\T^{2}
\end{equation}
and the four tensor $\A$ is computed from the so called Schouten tensor (see, for instance, \cite[Chapter 1]{catmas})
$$
\A_{\a\b}:=\R_{\a\b}-\frac{\R}{6}\gb_{\a\b}
$$
and is given by
$$
\A_{\a\b\g\d} = \frac12\left(\A_{\a\g}\gb_{\b\d}-\A_{\a\d}\gb_{\b\g}+\A_{\b\d}\gb_{\a\g}-\A_{\b\g}\gb_{\a\d}\right)\,.
$$
The Cotton tensor is given by
\begin{align}\label{eq-cot}\nonumber
\C_{\a\b\g} &:= \D_{\g}\A_{\a\b} - \D_{\b}\A_{\a\g}\\
&=\D_{\g}\R_{\a\b} - \D_{\b}\R_{\a\g}  -
\frac{1}{6}  \left( \D_{\g}\R  \gb_{\a\b} -  \D_{\b}\R
\gb_{\a\g} \right) \\
&= \D_{\g}\T_{\a\b} - \D_{\b}\T_{\a\g}-
\frac{1}{3}  \left( \D_{\g}\T  \gb_{\a\b} -  \D_{\b}\T
\gb_{\a\g} \right)
\end{align}
We recall also the following useful formula (a second bianchi identity for the Weyl tensor, see \cite[Chapter 2]{catmas})
\begin{align}\label{eq-sbinv}
\D_{\e} \W_{\a\b\g\d}+\D_{\g} \W_{\a\b\d\e}+\D_{\d} \W_{\a\b\e\g} &= \frac12 \left(\C_{\a\d\e}\gb_{\b\g}+\C_{\a\e\g}\gb_{\b\d}+\C_{\a\g\d}\gb_{\b\e}\right) \\ \nonumber
&\quad-\frac12 \left(\C_{\b\d\e}\gb_{\a\g}+\C_{\b\e\g}\gb_{\a\d}+\C_{\b\g\d}\gb_{\a\e}\right)
\end{align}
which will be crucial to compute the evolution in time of the Weyl curvature.

\

\section{Globally Hyperbolic Spacetimes}\label{s-glo}

Let $(\X,\gb)$ be a spacetime. It is well known that the causal structure of an arbitrary spacetime can have undesirable pathologies. All these can be avoided by postulating the existence of a Cauchy hypersurface $M^3$ in $\X$, i.e. a hypersurface $M^3$ with the property that any causal curve intersects it at
precisely one point. Spacetimes with this property are called {\em globally hyperbolic} and we will always assume that the space time is the {\em maximal} smooth Cauchy development of initial data on the Cauchy hypersurface $M^3$ (see \cite{bruger} and \cite[Chapter 7]{hawell}). Such spacetimes are in particular stable causal, i.e. they allow the existence of a globally defined differentiable function $t$ whose gradient $\D t$ is everywhere time-like. To be more general, we will consider also the case where the gradient $\D t$ could be light-like on some subsets of $\X$. In this case we will say that the spacetime is {\em almost globally hyperbolic}. This allows the presence of possible horizons. We call $t$ a {\em time function} and the foliation given by its level surfaces a $t$-foliation. We denote by $T$ the future directed unit normal to the foliation.
Topologically, a space-time foliated by the level surfaces of a time function is diffeomorphic to a product manifold $I \times M^3$ where $I=[0,T)$ and $M^3$ is a three-dimensional smooth manifold. In fact the spacetime can be parametrized by points on the slice $t=0$ by following the integral curves of $\D t$. Relative to this parametrization the spacetime metric $\gb$ takes the form
\begin{equation}\label{eq-gam}
\gb=-N^2(t,x)dt^2+g_{ij}(t,x)dx^i dx^j 
\end{equation}
where $t\in I$ and $x=(x^1,x^2,x^3)$ are arbitrary coordinates on the slice $t=0$. The function $N(t,x)=\gb(\D t,\D t)^{-1/2}$ is called the {\em lapse function} of the foliation and $g_{ij}$ its first fundamental form. We will denote by $M_t=\{t\}\times M^3$ the leaves of the foliation. The unit normal to the foliation $T$ is given by $T=N^{-1}\p_t$. The second fundamental form $h$ of the foliation is given by
\begin{equation}\label{eq-sf}
h_{ij}=-\frac{1}{2N}\p_t g_{ij}.
\end{equation}
We denote by $\nabla$ the covariant derivative on the leaves $M_t$ and by $R_{ijkl}$, $R_{ij}$ and $R$ its Riemann, Ricci  and scalar curvature, respectively. Since $M^3$ is three-dimensional, the Riemann tensor of $g$ can be totally recovered from the Ricci tensor, and we have the decomposition (see for instance \cite{catmas})
\begin{equation}\label{eq-rm3}
R_{ijkl} = \left(R_{ik}g_{jl}-R_{il}g_{jk}+R_{jl}g_{ik}-R_{jk}g_{il}\right)-\frac{R}{2}\left(g_{ik}g_{jl}-g_{il}g_{jk}\right).
\end{equation}
By classical formulas, the second fundamental form $h$, the lapse function $N$ and the curvature $R_{ijkl}$ of the foliation are connected to the spacetime curvature tensor $\R_{\a\b\g\d}$ by the following (for instance, see \cite{chrkla})
\begin{align*}
\R_{ijkl} &= R_{ijkl}+ h_{ik}h_{jl}-h_{il}h_{jk} \\
\R_{Tijk} &= \nabla_j h_{ik}-\nabla_k h_{ij} \\
N \R_{TiTj} &= \p_t h_{ij}+ N h_{ik}h_{kj}+\nabla_i \nabla_j N\,,
\end{align*}
where $\R_{ijkl}=\R(\p_i,\p_j,\p_k,\p_l)$, $\R_{Tijk}=\R(T,\p_i,\p_j,\p_k)$ and $\R_{TiTj}=\R(T,\p_i,T,\p_j)$ are the components of the spacetime curvature relative to arbitrary coordinates on $M^3$. Tracing the previous equations we get
\begin{align*}
N \R_{ij} &= N R_{ij} -\p_t h_{ij} +N H h_{ij}-2N h_{il}h_{jl} -\nabla_i \nabla_j N\\
\R_{Tj} &= \nabla_j H-\nabla_k h_{jk} \\
N \R_{TT} &= \p_t H- N |h|^2+\Delta N\,,
\end{align*}
where $H$ denotes the mean curvature of the foliation, namely $H=g^{ij}h_{ij}$. Imposing the Einstein equation \eqref{eq-ein} we obtain 
\begin{align}\nonumber
N\left(\T_{ij}-\frac12\T  g_{ij}\right) &= N R_{ij} -\p_t h_{ij} +N H h_{ij}-2N h_{il}h_{jl} -\nabla_i \nabla_j N\\\label{eq-ricvac}
\T_{Tj} &= \nabla_j H-\nabla_k h_{jk} \\\nonumber
N\left(\T_{TT}+\frac12\T\right)  &= \p_t H- N |h|^2+\Delta N\,.
\end{align}
In particular, tracing these equations, we discover the so called {\em Einstein constrained equations} for the foliation of a  spacetime
\begin{align}\label{eq-coneq}
R + H^2 -|h|^2 &= 2\T_{TT} \\
\nabla_j H-\nabla_k h_{jk}&=\T_{Tj}\,.
\end{align}
Using \eqref{eq-rm}, since 
$$
\gb_{ij}=g_{ij},\quad \gb_{Ti}=0\quad \text{and}\quad \gb_{TT}=-1\,, 
$$
we have
\begin{align*}
\R_{ijkl} &= \W_{ijkl}+\frac12\left(\T_{ik}g_{jl}-\T_{il}g_{jk}+\T_{jl}g_{ik}-\T_{jk}g_{il}\right)-\frac{\T}{3}\left(g_{ik}g_{jl}-g_{il}g_{jk}\right) \\
\R_{Tijk} &= \W_{Tijk}+\frac12\left(\T_{Tj}g_{ik}-\T_{Tk}g_{ij}\right)\\
\R_{TiTj} &= \W_{TiTj}+\frac12\left(\T_{TT}g_{ij}-\T_{ij}\right)+\frac{\T}{3}g_{ij}
\end{align*}
and, from \eqref{eq-rm3}, we obtain
\begin{align}\label{eq-g1}
\W_{ijkl} &= -\frac12\left(\T_{ik}g_{jl}-\T_{il}g_{jk}+\T_{jl}g_{ik}-\T_{jk}g_{il}\right)\\\nonumber
&\quad +\left(R_{ik}g_{jl}-R_{il}g_{jk}+R_{jl}g_{ik}-R_{jk}g_{il}\right)\\\nonumber&\quad-\frac{3R-2\T}{6}\left(g_{ik}g_{jl}-g_{il}g_{jk}\right)+ h_{ik}h_{jl}-h_{il}h_{jk} \\\label{eq-g2}
\W_{Tijk} &= -\frac12\left(\T_{Tj}g_{ik}-\T_{Tk}g_{ij}\right)+\nabla_j h_{ik}-\nabla_k h_{ij} \\\label{eq-g3}
N \W_{TiTj} &= -\frac12N\left(\T_{TT}g_{ij}-\T_{ij}\right)-\frac{\T}{3}Ng_{ij}+\p_t h_{ij}+ N h_{ik}h_{kj}+\nabla_i \nabla_j N\,,
\end{align}

Given a globally hyperbolic spacetime  $(\X,\gb)=(I\times M^3, -N^2 dt^2+g)$, being the vector field $T$ a timelike unit vector, following the $1+3$ covariant description of gravitational fields \cite{ehl} and in analogy to the decomposition of the Maxwell tensor into electric and magnetic parts, the Weyl tensor $\W$ can also be decomposed into electric and magnetic parts as
$$
\E_{\alpha\beta}=\W_{\alpha\gamma\beta\delta}T^{\gamma}T^{\delta},\quad \H_{\alpha\beta}=\frac12\bm{\varepsilon}_{\alpha\gamma\delta\eta}\W^{\delta\eta}_{\quad\beta\theta}T^{\gamma}T^{\theta}
$$   
where $\bm{\varepsilon}_{\alpha\gamma\delta\eta}$ is the volume element. Using arbitrary coordinates on $M^3$, we get
$$
\E_{ij}=\W_{TiTj},\,\E_{\alpha T}=0, \quad \H_{ij}=\frac12\bm{\varepsilon}_{iT\delta\eta}\W^{\delta\eta}_{\quad jT}=\frac12\bm{\varepsilon}_{iTkl}\W^{kl}_{\quad jT},\,\H_{\alpha T}=0.
$$
In particular, on a globally hyperbolic spacetime, $\E$ vanishes if and only if $\W_{TiTj}=0$ locally, while $\H$ vanishes if and only if $\W_{Tijk}=0$ locally. Spacetimes with zero electric part $\E$ (or magnetic part $\H$) are called in the literature {\em Pure Electric (or Magnetic) spacetimes} (see, for instance \cite{mac, bon2, her}).

We note that, one has
$$
|\W_{\alpha\beta\gamma\delta}|^2=\frac{1}{4}\left(|\E_{\alpha\beta}|^2-|\H_{\alpha\beta}|^2\right),\quad |\W_{\alpha\beta\gamma\delta}|_{\gbr}^2=\frac{1}{4}\left(|\E_{\alpha\beta}|^2+|\H_{\alpha\beta}|^2\right)\,.
$$
In particular, the quantity $W$ considered in \cite{ell} constructed from the so called Bel-Robinson tensor \cite{bel}, coincides exactly with $|\W_{\alpha\beta\gamma\delta}|_{\gbr}^2$. 

It may be noted that the definition of electric/magnetic part of the Weyl tensor depends on $T$, and then is observer dependent, as $T$ may be 
interpreted as the four-velocity of some observer, to be defined as ``Eulerian observer'', also called fiducial observer. Still,  albeit different timelike congruences experience different decompositions, the tensorial nature of the equations defining pure spacetime regions ensures covariance.

\

\section{A Weyl Entropy}\label{s-weyent}

Throughout this section we will consider a globally hyperbolic spacetime 
$$
(\X,\gb)=(I\times M^3, -N^2 dt^2+g)
$$ 
as discussed in the previous section. On a generic spacetime the functions $|\W_{\a\b\g\d}|^2$ can be negative somewhere. In fact, while the norm of a symmetric two tensor (such as the Ricci tensor) is always nonnegative, a simple computation (see Lemma \ref{l-for}) gives 
\begin{equation}\label{eq-nw}
|\W_{\a\b\g\d}|^2=-4|\W_{Tijk}|^2+4|\W_{TiTj}|^2+|\W_{ijkl}|^2=-4|\W_{Tijk}|^2+8|\W_{TiTj}|^2\,.
\end{equation}
We ask for the   entropy to be measured from the Weyl tensor, to be nonnegative and to be zero if and only if the Weyl tensor vanishes. Thus we will consider the norm computed with respect to the Riemannian metric $\gbr$ associated to $\gb$, namely:
$$
\gbr := N^{2} dt^{2}+g \,.
$$
Thus, we have
$$
|\W_{\a\b\g\d}|_{\gbr}^2=4|\W_{Tijk}|^2+8|\W_{TiTj}|^2 
$$
which is nonnegative and vanishes if and only if $\W=0$. We give the following definition of Weyl entropy:

\begin{definition} At given point $(t,x)\in \X$ we define the {\em Weyl entropy} $\S=\S(t,x)$ as
\begin{align*}
\S &:= \frac{|\W_{\a\b\g\d}|_{\gbr}}{|\R_{\a\b\g\d}|_{\gbr}}\sqrt{g}\,,
\end{align*}
where $\R_{\a\b\g\d}\neq 0$ and $\S=\sqrt{g}$ where $\R_{\a\b\g\d}=0$. Here $\sqrt{g}$ denotes the square-root of the determinant of the space metric $g$. We will call 
$$
{\bf s}:=
\begin{cases}
\frac{|\W_{\a\b\g\d}|_{\gbr}}{|\R_{\a\b\g\d}|_{\gbr}}&\quad\text{if}\quad\R\neq 0 \\
1 &\quad\text{if}\quad\R=0 
\end{cases}
$$
the {\em  Weyl entropy density}.
\end{definition}
Given a compact space domain $U\in M^3$ (with two dimensional smooth boundary $\Sigma:=\partial U$), we define the {\em Weyl entropy in $U$} as the averaged integral
$$
\S_{U}:=\frac{\text{Area}(\Sigma)}{\operatorname{Vol}_{g}(U)}\int_{U}\S=\frac{\text{Area}(\Sigma)}{\operatorname{Vol}_{g}(U)}\int_{U}{\bf s}\sqrt{g},
$$
where $\operatorname{Vol}_{g}(U)=\int_{U}\sqrt{g}$ and $\text{Area}(\Sigma)$ is the two-dimensional Haussdorf  area of the boundary, computed with respect to the metric induced by $g$ on $\Sigma$. By definition, $\S_U\geq 0$, with equality if and only if the Weyl tensor vanishes on $U$. Moreover, from Lemma \ref{l-for}, we have
$$
\S_{U} \leq \text{Area}(\Sigma)
$$
with equality if and only if the Ricci tensor of $\gb$ vanishes. From the Einstein equation, this is equivalent to say that the stress-energy tensor $\T$ vanishes (vacuum region). In particular we have the following characterization of space region with maximal   Weyl entropy: the Weyl entropy in $U$, $\S_U$ is maximal at a given time $t$ if and only if $
\T = 0$ on $\{t\}\times U$.

\medskip

We stress again that, in the above definition, the appearance of the factor $\text{Area}(\Sigma)$ is an ansatz which is seemingly arbitrary, but 
it can be justified on the grounds of black hole thermodynamics, and is compatible with the conjecture that the maximal entropy for a 
region with boundary area $\text{Area}(\Sigma)$ is the black hole entropy, which is $A/4$, as well known. In this sense, in the definition we could also 
introduce a multiplicative constant $\zeta$ (with $\zeta=1/4$ for static black holes), in such a way to implement fully the assumption (E4) of \cite{ell}.

\medskip

As well known, in thermodynamics the entropy is maximal for equilibrium states. Of course, it is not possible to associate the Weyl entropy with a thermodynamic entropy, even in a non-equilibrium framework, as the possibility to define a local temperature in a natural way is missing (cf. anyway the picture in \cite{ell}). Still, some `entropy-like' properties 
can be identified. In fact, in Section \ref{s-max}, we show that the Weyl entropy related to region where there is monotonicity is maximal at some time, if and only if the region is a static vacuum solution, as if it were a real equilibrium entropy (an equilibrium state is a static one, and static solutions appear as 
sort of 'equilibrium states of the geometry').

\medskip

In order to simplify the computations for the evolution of the Weyl entropy, we introduce the following quantity, where $\A\neq 0$, 
\begin{equation}\label{ge-wge}
{\bf s^{2}}=\frac{|\W_{\a\b\g\d}|^{2}_{\gbr}}{|\R_{\a\b\g\d}|^{2}_{\gbr}} = \frac{\bf \bar{s}^{2}}{\bf \bar{s}^{2}+1}\,,
\end{equation}
where
$$
{\bf \bar{s}^{2}}:=\frac{|\W_{\a\b\g\d}|^{2}_{\gbr}}{|\A_{\a\b\g\d}|^{2}_{\gbr}}\,.
$$

For reasons that will be clear in the rest of the computations, we will study the entropy $\S$ on spacetimes $(\X,\gb)$ satisfying (locally)
either
\begin{equation}\label{H1}\tag{H1}
\W_{Tijk}\equiv 0
\end{equation}
or
\begin{equation}\label{H2}\tag{H2}
\W_{TiTj}\equiv 0\,.
\end{equation}
As observed in the previous section, condition \eqref{H1} is equivalent for the spacetime to be {\em Pure Electric} (also called {\em Coulomb like}), while condition \eqref{H2} is equivalent for the spacetime to be {\em Pure Magnetic} (or {\em Wave Like}). From \eqref{eq-nw} it is clear that
\begin{equation}\label{eq-sss}
|\W_{\a\b\g\d}|_{\gbr}=\begin{cases} |\W_{\a\b\g\d}|&\quad\text{if }\eqref{H1}\text{ holds}\,, \\ -|\W_{\a\b\g\d}|&\quad\text{if }\eqref{H2}\text{ holds}\,.\end{cases}
\end{equation}

\

Geometrically, from equation \eqref{eq-g2} there is an equivalence between condition \eqref{H1} and the second fundamental form $h_{ij}$ to be a {\em Codazzi tensor} on the space slice $M^3$, whenever the stress-energy tensor is diagonal (e.g. perfect fluid case). In this case, sufficient conditions that imply \eqref{H1} are $h_{ij}\equiv 0$ (totally geodesic foliation, i.e. time-symmetric spacetime) or, more in general, $h_{ij}\equiv\frac{H}{3}g_{ij}$ (totally umbilical foliation). A sufficient condition is also $\nabla h\equiv 0$, i.e. parallel second fundamental form. Many examples satisfying this assumption are well studied (for instance Bianchi type I, Lemaitre-Tolman \cite{ell, gro}). Examples of spacetime satisfying \eqref{H2} can be found in \cite{ell2, ari}.

\

\section{Pure Electric Spacetimes: Perfect Fluids}

\noindent Take a {\em perfect fluid} stress-energy tensor given by
\begin{equation}\label{eq-pf}
\T_{TT}=M, \quad \T_{Ti}=0, \quad \T_{ij}=P g_{ij}\,,
\end{equation}
where $M=M(t,x)$ is the mass density and $P=P(t,x)$ the pressure of the fluid. One has
$$
\T :=\gb^{\a\b}\T_{\a\b}= 3P - M\,
$$
and
$$
|\T_{\a\b}|^2=M^2+3P^2\,.
$$
Note that, from \eqref{eq-div0} one has
$$
0 = -\D_T \T_{TT} + \D_i \T_{iT} = -\D_T M -\Gb_{ii}^T \T_{TT} -\Gb_{iT}^j \T_{ij}= -\D_T M + H M + H P \,,
$$
i.e.
\begin{equation*}
\D_T M = (M+P) H \,.
\end{equation*}
Recall that $\D_T$ acts as $N^{-1}\p_t$ on functions. Moreover, from \eqref{eq-div0} one has 
$$
0 = -\D_T \T_{Tj} + \D_i \T_{ij} = \Gb_{TT}^i \T_{ij} + \Gb_{Tj}^T \T_{TT} + \p_j P = \p_j P\,,
$$
i.e.
\begin{equation}\label{eq-P}
\p_j P = 0 \,,
\end{equation}
thus $P$ is a function only of time, $P=P(t)$. In general, it is not a restriction to impose that the pressure and the mass satisfy the following {\em equation of state}:
$$
P = (k-1) M,
$$
for some function $k=k(t,x)$. Particular cases of interest are give by constant values of the type:
\begin{itemize}
\item {\em Radiation Dominated:} $k=\frac43$;
\item {\em Matter Dominated:} $k=1$;
\item {\em Vacuum Energy Dominated:} $k= 0$. 
\end{itemize}
These important cases are very-well known in physical literature, as they represent a standard description of the different epochs for the evolution of the Universe (see Figure 1). In the present modelization, it is possible to allow a local space-time dependence for the parameter $k$, in a generalization of the standard picture which allows an interpolation between the constant values occurring in physical models. In particular, we have
\begin{align}\label{eq-evmass}
\D_T M &= k H M \,.
\end{align}
We will use the following notation for the evolution of $k$:
$$
\
\D_T k =k' H
$$
where $H\neq 0$ and we set $k'=0$ whenever $H=0$. From \eqref{rm2} we get

\begin{align}\label{eq-rm2pf}\nonumber
|\R_{\a\b\g\d}|^{2} &=|\W_{\a\b\g\d}|^{2}+2|\R_{\a\b}|^{2}-\frac13 \R^{2} = |\W_{\a\b\g\d}|^{2} + 2|\T_{\a\b}|^{2}-\frac13\T^{2}\\
&= |\W_{\a\b\g\d}|^{2} + \frac53 M^2+3P^2+2MP\\\nonumber
&=|\W_{\a\b\g\d}|^{2} +\frac{9k^2-12k+8}{3}M^2\\\nonumber
&=|\W_{\a\b\g\d}|^{2} +|A_{\a\b\g\d}|^2\,,
\end{align}
and 
$$
|\A_{\a\b\g\d}|^2 = \frac{9k^2-12k+8}{3}M^2\,.
$$
Note also that $\A_{Tijk}=0$, since $\R_{Ti}=\T_{Ti}=0$. From \eqref{eq-evmass} we have
\begin{equation}\label{eq-evA}
\D_T |\A_{\a\b\g\d}|^2 = \frac{2k(9k^2-12k+8)+6k'(3k-2)}{3} H\,M^2\,.
\end{equation}

On a perfect fluid, since
$$
\T_{TT}=M,\quad \T_{iT}=0, \quad \T_{ij}=P g_{ij}\quad\text{and}\quad \T=3P -M\,,
$$ 
we have
\begin{align*}
\C_{ijT} &= \D_{T}\T_{ij} - \D_{j}\T_{iT}-
\frac{1}{3}  \left( \D_{T}\T  \gb_{ij} -  \D_{j}\T
\gb_{iT} \right) \\
& = (\D_T P)g_{ij}+\Gb_{ij}^T \T_{TT}+\Gb_{jT}^p \T_{ip} - \frac13 \left(3\D_T P - \D_T M\right)g_{ij} \\
&= -(M+P) h_{ij} + \frac13 (\D_T M) g_{ij}\,.
\end{align*}
From Proposition \ref{p-firnv} we obtain
\begin{equation}\label{eq-grapfgen}
\frac12 \D_T |\W_{\a\b\g\d}|^2 = 16 H |\W_{TiTj}|^2-24h_{jl}\W_{TiTj}\W_{TiTl}+4(M+P)h_{ij}\W_{TiTj}\,,
\end{equation}
and, if $P = (k-1) M$, then
\begin{equation}\label{eq-grapf}
\frac12 \D_T |\W_{\a\b\g\d}|^2 = 16 H |\W_{TiTj}|^2-24h_{jl}\W_{TiTj}\W_{TiTl}+4kMh_{ij}\W_{TiTj}\,,
\end{equation}

As a corollary, we have the following formula for the evolution of the  Weyl entropy of  Pure Electric Perfect Fluids:

\begin{proposition}\label{p-evopf} Let $(\X,\gb)$ be a globally hyperbolic perfect fluid satisfying \eqref{H1}. Then, the  Weyl entropy $\S$ satisfies
\begin{align}\label{eq-evoSvark}
\D_T \S &= \frac{|\W| |\A|^2}{|\R|_{\gbr}^3}\Big[-kH-H\frac{|\W|^2}{|\A|^2} - 24 \frac{\mathring{h}_{jl}\W_{TiTj}\W_{TiTl}}{|\W|^2}+4k M \frac{\mathring{h}_{ij}\W_{TiTj}}{|\W|^2} \nonumber \\ &\quad-H\frac{k'(3k-2)M^2}{|\A|^2}\Big]\sqrt{g}\,,
\end{align}
\end{proposition}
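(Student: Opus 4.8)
The plan is to differentiate $\S=\mathbf{s}\sqrt{g}$ directly along $\D_T$ and feed in the evolution identities \eqref{eq-grapf} and \eqref{eq-evA} already established for perfect fluids. First I would simplify all norms under the hypothesis \eqref{H1}. By \eqref{eq-sss} we have $|\W_{\a\b\g\d}|_{\gbr}=|\W_{\a\b\g\d}|$; moreover, since $\A_{Tijk}=0$ (so $\A_{\a\b\g\d}$ has no components with an odd number of $T$-indices), the Riemannian and Lorentzian norms of $\A$ agree, $|\A_{\a\b\g\d}|_{\gbr}=|\A_{\a\b\g\d}|$. Combined with \eqref{eq-rm2pf} this yields $|\R_{\a\b\g\d}|^2_{\gbr}=|\W|^2+|\A|^2$ (abbreviating $|\W|=|\W_{\a\b\g\d}|$, etc.), so that the density takes the form $\mathbf{s}^2=|\W|^2/(|\W|^2+|\A|^2)$ in accordance with \eqref{ge-wge}.

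Next I would apply the product rule, $\D_T\S=(\D_T\mathbf{s})\sqrt{g}+\mathbf{s}\,\D_T\sqrt{g}$. The volume factor is immediate: from \eqref{eq-sf} one has $\p_t g_{ij}=-2Nh_{ij}$, whence $\D_T\sqrt{g}=\tfrac1N\p_t\sqrt{g}=-H\sqrt{g}$. For the density I would differentiate the square $\mathbf{s}^2=|\W|^2/|\R|^2_{\gbr}$; since the denominator is exactly $|\R|^2_{\gbr}=|\W|^2+|\A|^2$, the $\D_T|\W|^2$ contributions partly telescope and one obtains
\[
\D_T\mathbf{s}=\frac{(\D_T|\W|^2)\,|\A|^2-|\W|^2\,\D_T|\A|^2}{2\,|\W|\,|\R|^3_{\gbr}}.
\]

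Then I would substitute the two evolution formulas over the common factor $|\W||\A|^2/|\R|^3_{\gbr}$. Equation \eqref{eq-grapf} supplies $\tfrac12\D_T|\W|^2$; using the trace-free splitting $h_{ij}=\mathring{h}_{ij}+\tfrac H3 g_{ij}$, the tracelessness of the electric part $g^{ij}\W_{TiTj}=0$, and the identity $|\W|^2=8|\W_{TiTj}|^2$ valid under \eqref{H1} by \eqref{eq-nw}, the pure-trace pieces collapse to leave $\tfrac12\D_T|\W|^2=H|\W|^2-24\,\mathring{h}_{jl}\W_{TiTj}\W_{TiTl}+4kM\,\mathring{h}_{ij}\W_{TiTj}$; dividing by $|\W|^2$ contributes the $+H$ term together with the two $\mathring{h}$-terms of \eqref{eq-evoSvark}. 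Equation \eqref{eq-evA} supplies $\D_T|\A|^2$, and dividing by $2|\A|^2=\tfrac23(9k^2-12k+8)M^2$ produces $-kH-H\,k'(3k-2)M^2/|\A|^2$. Finally, $\mathbf{s}\,\D_T\sqrt{g}=-H|\W|(|\W|^2+|\A|^2)/|\R|^3_{\gbr}\,\sqrt{g}$, which over the same common factor equals $-H|\W|^2/|\A|^2-H$; the stray $-H$ here cancels against the $+H$ from $\tfrac12\D_T|\W|^2/|\W|^2$, leaving precisely the bracket in \eqref{eq-evoSvark}.

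The main obstacle I anticipate is bookkeeping rather than conceptual. One must keep the Riemannian and Lorentzian norms straight: the identity $|\A|_{\gbr}=|\A|$, which hinges on $\A_{Tijk}=0$, is what makes $|\R|^2_{\gbr}=|\W|^2+|\A|^2$ hold as written, and without it the telescoping in $\D_T\mathbf{s}$ would fail. One must also carry out the trace-free reduction carefully, so that exactly the right multiple of $H|\W|^2$ emerges to cancel the volume term while the coefficients $-24$ and $+4kM$ survive untouched. It is in coordinating $g^{ij}\W_{TiTj}=0$ with $|\W|^2=8|\W_{TiTj}|^2$ to dispose of the $\tfrac H3$-pieces that an arithmetic slip is most likely.
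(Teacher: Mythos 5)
Your proposal is correct and follows essentially the same route as the paper: both differentiate $\S=\mathbf{s}\sqrt{g}$ via the quotient rule using $|\R|_{\gbr}^2=|\W|^2+|\A|^2$ (justified by $\A_{Tijk}=0$ and Lemma \ref{l-for}), substitute \eqref{eq-grapf} and \eqref{eq-evA}, and cancel the $+H$ from $\tfrac12\D_T|\W|^2/|\W|^2$ against part of the $-H\sqrt{g}$ volume term. The only cosmetic difference is that the paper routes the computation through $\bar{\mathbf{s}}=|\W|/|\A|$ and also records separately the degenerate case $\A=0$, where the formula reduces to $\D_T\S=-H\sqrt{g}$.
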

\begin{proof} First of all, we note that if at some point $\A=0$ (i.e. $M=0$), then $\mathbf{s}=1$. Since $\mathbf{s}\leq1$, $\mathbf{s}$ attains a maximum and $\D_{T}\mathbf{s}=0$. In this case (see \eqref{eq-evdet} below) 
\begin{equation}\label{eq-coin}
\D_{T}\S =- H \sqrt{g}.
\end{equation}
On the other hand, where $\A \neq 0$ using \eqref{ge-wge} and \eqref{eq-nw}, we obtain
$$
\D_T \S = \D_{T}\left(\mathbf{s}\sqrt{g}\right)=\left(\frac{\mathbf{s}}{\bf{\bar{s}}}\right)^3\D_T \mathbf{\bar{s}}\,\sqrt{g} + \mathbf{s}\,\D_T \sqrt{g} \,.
$$
Since, from Lemma \ref{l-for} $|\A|_{\gbr}^2=|\A|^2$ and, from \eqref{eq-rm2pf} and \eqref{eq-evA}, we have
$$
|\A|^2=\frac{9k^2-12k+8}{3}M^2
$$
and
$$
\D_T |\A| = \frac{k(9k^2-12k+8)+3k'(3k-2)}{3|\A|} H\,M^2\,.
$$
Thus, from \eqref{eq-sss},  \eqref{eq-grapf} and Lemma \ref{l-for} we obtain
\begin{align*}
\D_T \bf{\bar{s}}&=\D_T \left(\frac{|\W|}{|\A|}\right) = \frac{1}{|\A|^2}\left(|\A| \D_T |\W| - |\W| \D_T |\A|\right) \\
&=\frac{1}{|\A|^2} \Big[ \frac{|\A|}{|\W|}\left(H |\W|^2-24 \mathring{h}_{jl}\W_{TiTj}\W_{TiTl}+4kM \mathring{h}_{ij}\W_{TiTj} \right)\\ &\quad-H\frac{|\W|}{3|\A|}\left[k(9k^2-12k+8)+3k'(3k-2)\right]M^2\Big] \\
&= H \frac{|\W|}{|\A|} - 24 \frac{\mathring{h}_{jl}\W_{TiTj}\W_{TiTl}}{|\W||\A|}+4k M \frac{\mathring{h}_{ij}\W_{TiTj}}{|\W||\A|}\\ &\quad-H\frac{|\W|}{3|\A|^3}\left[k(9k^2-12k+8)+3k'(3k-2)\right]M^2 \,.
\end{align*}
From \eqref{eq-sf} and the well know formula for the variation of the determinant, one has
\begin{align}\label{eq-evdet}
\D_T \sqrt{g} = \frac{1}{2N}\left(g^{ij}\p_t g_{ij}\right)\sqrt{g} = - H \sqrt{g}\,.
\end{align}
Using Lemma \ref{l-for}, we get
\begin{align*}
\D_T \S &= \frac{|\A|^3}{|\R|_{\gbr}^3}\Big[ H \frac{|\W|}{|\A|} - 24 \frac{\mathring{h}_{jl}\W_{TiTj}\W_{TiTl}}{|\W||\A|}+4k M \frac{\mathring{h}_{ij}\W_{TiTj}}{|\W||\A|}\\ &\quad-H\frac{|\W|}{3|\A|^3}\left[k(9k^2-12k+8)+3k'(3k-2)\right]M^2-H \frac{|\W||\R|_{\gbr}^2}{|\A|^3}\Big]\sqrt{g} \\
&= \frac{|\W| |\A|^2}{|\R|_{\gbr}^3}\Big[-H\left(\frac{|\R|_{\gbr}^2}{|\A|^2}-1\right) - 24 \frac{\mathring{h}_{jl}\W_{TiTj}\W_{TiTl}}{|\W|^2}+4k M \frac{\mathring{h}_{ij}\W_{TiTj}}{|\W|^2}\\ &\quad-H\frac{1}{3|\A|^2}\left[k(9k^2-12k+8)+3k'(3k-2)\right]M^2\Big]\sqrt{g} \\
&=\frac{|\W| |\A|^2}{|\R|_{\gbr}^3}\Big[-H\frac{|\W|^2}{|\A|^2} - 24 \frac{\mathring{h}_{jl}\W_{TiTj}\W_{TiTl}}{|\W|^2}+4k M \frac{\mathring{h}_{ij}\W_{TiTj}}{|\W|^2}\\ &\quad-H\frac{1}{3|\A|^2}\left[k(9k^2-12k+8)+3k'(3k-2)\right]M^2\Big]\sqrt{g}\,.
\end{align*}
Since
$$
\frac13\left[k(9k^2-12k+8)+3k'(3k-2)\right]M^2=k|\A|^2+k'(3k-2)M^2\,,
$$
we obtain
\begin{align*}
\D_T \S &= \frac{|\W| |\A|^2}{|\R|_{\gbr}^3}\Big[-kH-H\frac{|\W|^2}{|\A|^2} - 24 \frac{\mathring{h}_{jl}\W_{TiTj}\W_{TiTl}}{|\W|^2}+4k M \frac{\mathring{h}_{ij}\W_{TiTj}}{|\W|^2}\\ &\quad-H\frac{k'(3k-2)M^2}{|\A|^2}\Big]\sqrt{g}\,,
\end{align*}
and this concludes the proof. Note that this formula coincides with \eqref{eq-coin} when $\A=0$.
\end{proof}

\

\section{Monotonicity of the Weyl Entropy in Perfect Fluid Electric Regions}\label{s-mon}

In this section we will assume the spacetime $(\X,\gb)$ to be almost globally hyperbolic. In fact, we note that all the computations done in the previous sections are local, so they hold on every open subset of $\X$ where the lapse function $N$ is strictly positive and the Weyl tensor $\W$ satisfies \eqref{H1}.

\begin{definition}\label{d-pfe}  Let $(\X,\gb)=(I\times M^3, -N^2 dt^2+g)$ be an almost globally hyperbolic spacetime, let $U\subset M^3$ and $I'\subset I$ be two open sets. We say that $\P:=I'\times U$ is a {\em $k$-perfect fluid electric region} if 
\begin{itemize}

\item[(1)] the stress-energy tensor $\T$ is given by \eqref{eq-pf} on $\P$, with 
$$
P=(k-1)M,\quad k=k(t,x)\in\left[0,\frac43\right];
$$ 

\item[(2)] the Weyl tensor $\W$ satisfies \eqref{H1} on $\P$;

\item[(3)] the lapse function $N$ is strictly positive on $\P$.

\end{itemize}
\end{definition}

We need also the following definition concerning the expansion and the intrinsic curvature of the space metric:

\begin{definition}\label{d-pfe}  Let $(\X,\gb)=(I\times M^3, -N^2 dt^2+g)$ be an almost globally hyperbolic spacetime, let $U\subset M^3$ and $I'\subset I$ be two empty open sets. We say that $I'\times U$ is a {\em $\alpha$-expanding region} if there exists a function $\alpha=\alpha(t,x)\in\left[0,\frac13\right]$ such that
$$
h_{ij} \leq \alpha H g_{ij} \leq 0 \quad \text{on}\quad I'\times U\,.
$$
In particular, we will denote by $\Pa$ a $k$-perfect fluid electric $\alpha$-expanding region.
\end{definition}

From \eqref{eq-sf} the condition of $\alpha$-expansion is equivalent to
$$
\p_t g_{ij} \geq - \alpha N H g_{ij}\,.
$$
In particular $0$-expansion is equivalent to say that spatial metric is increasing in time, so the (space) region is {\em expanding}. From algebraic reasons, the case $\alpha=\frac13$ implies that the foliation is totally umbilical, i.e. $h_{ij}=\frac{H}{3}g_{ij}$, or equivalently, $\mathring{h}=0$. Note that this holds also if $H$ is zero (minimal foliation), since the $\alpha$-expansion implies $h_{ij}\leq 0$, and thus $h=0$. 

We will assume that the fluid parameters $k,\alpha$, satisfy the following evolution inequalities
\begin{equation}\label{eq-asspar}
0\leq k' \leq 
\frac{k(9k^2-12k+8)}{3(4-3k)}\min\left\{\frac{9\alpha'}{4(1-3\alpha)}, 1\right\},\quad \alpha'\geq 0,
\end{equation}
where $\D_T u=:u' H$ where $H\neq 0$ and $u':=0$ where $H=0$. Note that, since $H\leq 0$, the assumptions \eqref{eq-asspar} imply that $\D_T k\leq 0$ and $\D_T\alpha\leq 0$, which are very natural (see Figure 1).

As a consequence of Proposition \ref{p-evopf} we can show an estimate for the modified  Weyl entropy $\S^{\bf{pf}}$ on a $\Pa$, defined as
$$
\S^{\bf{pf}} := \S+ \mathbf{s_{crit}} \sqrt{g}=\left(\frac{|\W_{\a\b\g\d}|_{\gbr}}{|\R_{\a\b\g\d}|_{\gbr}}+\mathbf{s_{crit}}\right) \sqrt{g}\,,
$$
where
$$
\mathbf{s_{crit}}:=\sqrt{1-3\alpha}\left(\frac{\sqrt{2}}{4}+2k\sqrt{\frac{1-3\alpha}{9k^2-12k+8}}\right)\,.
$$
Note that $\mathbf{s_{crit}}=\mathbf{s_{crit}}(t,x)\geq 0$ and 
$$
\mathbf{s_{crit}}=0 \quad\Longleftrightarrow\quad \alpha=\frac13 \,.
$$
Moreover, we will show in the proof of Theorem \ref{t-mon} that the assumption \eqref{eq-asspar} implies that $\D_T \mathbf{s_{crit}}\geq 0$ and thus 
$$
\D_T \left(\mathbf{s_{crit}} \sqrt{g}\right)\geq 0,
$$ 
which is very natural. The definition of $ \S^{\bf{pf}}$ is such that monotonicity in time is preserved only at the cost to introduce also the 
new contribution $\mathbf{s_{crit}}$, which is related in part to the geometry itself (through its dependence on the parameter $\alpha$) and in part 
on the matter field equation of state (through the parameter $k$). This further term vanishes only when $\alpha=\frac{1}{3}$, i.e. in the 
homogeneous case. It is worthwhile mentioning that, in the standard discussions in literature, $k$ is a constant, and also $\alpha$ 
can be assumed to be constant. As a consequence, in such a situation, the definition given above amounts to shifting the 
ratio $\frac{|\W_{\a\b\g\d}|_{\gbr}}{|\R_{\a\b\g\d}|_{\gbr}}$ by a constant and, moreover, the assumptions \eqref{eq-asspar} on the parameters are automatically satisfied.

\begin{theorem}\label{t-mon} On every $k$-perfect fluid electric $\alpha$-expanding region $\Pa$ satisfying \eqref{eq-asspar} the  entropy $\S^{\bf{pf}}$ is monotonically increasing, i.e.
$$
\D_T \S^{\bf{pf}} \geq 0.
$$
Moreover, the equality holds at some point if and only  if $\D_T \mathbf{s_{crit}}=0$ and either $h=0$, or $|\W|=0$, $\alpha=\frac13$ and $\mathbf{s_{crit}}=0$.
\end{theorem}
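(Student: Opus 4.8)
The plan is to differentiate the two summands of $\S^{\bf{pf}}=\S+\mathbf{s_{crit}}\sqrt g$ separately. Using $\D_T\sqrt g=-H\sqrt g$ from \eqref{eq-evdet}, I would write
\[
\D_T\S^{\bf{pf}}=\big(\D_T\S-H\,\mathbf{s_{crit}}\sqrt g\big)+(\D_T\mathbf{s_{crit}})\sqrt g .
\]
Since the region is $\alpha$-expanding, $h_{ij}\le\alpha Hg_{ij}\le0$ forces $H\le0$, so $-H\,\mathbf{s_{crit}}\sqrt g\ge0$ is precisely the term that will compensate the possibly negative cross terms of $\D_T\S$; it thus suffices to prove (a) $\D_T\mathbf{s_{crit}}\ge0$ and (b) $\D_T\S-H\,\mathbf{s_{crit}}\sqrt g\ge0$. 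First I would dispose of the degenerate cases: if $H=0$ then $h_{ij}\le0$ with $\operatorname{tr}_g h=H=0$ gives $h\equiv0$, hence $\mathring h=0$ and $\D_T\S=0$ by Proposition \ref{p-evopf}; and the locus $\A=0$ ($M=0$) is already covered by \eqref{eq-coin}. So I may assume $H<0$ and $\A\ne0$.

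The heart of claim (b) is the control of the two cross terms in \eqref{eq-evoSvark}, and I would isolate the two sharp algebraic facts that reproduce the two summands of $\mathbf{s_{crit}}$ exactly. First, in an eigenbasis the condition $h_{ij}\le\alpha Hg_{ij}$ says every eigenvalue of $h$ is $\le\alpha H$, hence every eigenvalue $\mu_i$ of $\mathring h$ satisfies $\mu_i\le\tfrac{1-3\alpha}{3}(-H)=:c$; maximizing $\sum_i\mu_i^2$ subject to $\sum_i\mu_i=0$ and $\mu_i\le c$ (the extremum is $(c,c,-2c)$) gives the sharp bound $|\mathring h|\le\sqrt{\tfrac23}\,(1-3\alpha)(-H)$. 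Second, under \eqref{H1} one has $\W_{Tijk}\equiv0$, so $|\W|^2=8|\E|^2$ with $\E_{ij}:=\W_{TiTj}$ symmetric and (since $\W$ is trace-free) also trace-free; for a trace-free symmetric $3\times3$ matrix Newton's identities give $\operatorname{tr}(\E^4)=\tfrac12|\E|^4$, whence the trace-free part of $\E^2$ obeys $|(\E^2)^{\circ}|=|\E|^2/\sqrt6$. As $\mathring h$ is trace-free, $\mathring h_{jl}\W_{TiTj}\W_{TiTl}=\langle\mathring h,(\E^2)^{\circ}\rangle$ and $\mathring h_{ij}\W_{TiTj}=\langle\mathring h,\E\rangle$, so Cauchy–Schwarz yields $|\mathring h_{jl}\W_{TiTj}\W_{TiTl}|\le|\mathring h|\,|\E|^2/\sqrt6$ and $|\mathring h_{ij}\W_{TiTj}|\le|\mathring h|\,|\E|$.

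I would then prove (b) after clearing denominators, i.e. show $E:=|\R|_{\gbr}^{3}\big(\D_T\S/\sqrt g-H\mathbf{s_{crit}}\big)\ge0$, using $|\R|_{\gbr}^2=|\W|^2+|\A|^2$ and $|\A|^2=\tfrac13(9k^2-12k+8)M^2$. Inserting $|\E|=|\W|/(2\sqrt2)$ and $M|\A|^2=\sqrt3\,|\A|^3/\sqrt{9k^2-12k+8}$, the wave-like cross term $4kM\mathring h_{ij}\W_{TiTj}$ contributes to $E$ an amount bounded in modulus by $\tfrac{2k(1-3\alpha)}{\sqrt{9k^2-12k+8}}(-H)|\A|^3$, which is exactly $(-H)|\A|^3$ times the second summand of $\mathbf{s_{crit}}$; it is therefore absorbed by $-H\,\mathbf{s_{crit}}|\R|_{\gbr}^3\ge-H\,\mathbf{s_{crit}}|\A|^3$, with equality possible. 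The second cross term $-24\mathring h_{jl}\W_{TiTj}\W_{TiTl}$ contributes at most $(1-3\alpha)(-H)|\A|^2|\W|$, and I would dominate it by the positive term $(-H)|\W|^3$ together with the first summand of $\mathbf{s_{crit}}$; with $w=|\W|$, $a=|\A|$ this reduces to the homogeneous one-variable inequality
\[
w^3+\tfrac{\sqrt2}{4}\sqrt{1-3\alpha}\,(w^2+a^2)^{3/2}\ \ge\ (1-3\alpha)\,a^2w ,
\]
checked by elementary calculus for $\alpha\in[0,\tfrac13]$. Finally, the $k'$-term $-H\tfrac{k'(3k-2)M^2}{|\A|^2}$ is negative only when $k<\tfrac23$, and there the bound $k'\le\tfrac{k(9k^2-12k+8)}{3(4-3k)}$ (the ``$\le1$'' branch of \eqref{eq-asspar}) makes it dominated by the positive term $-kH|\A|^2|\W|$, since the comparison reduces to $\tfrac{2-3k}{4-3k}\le1$. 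For claim (a) I would differentiate $\mathbf{s_{crit}}$ using $\D_T(1-3\alpha)=-3\alpha'H$ and $\D_Tk=k'H$; collecting the $k'$-contributions produces a factor proportional to $4k'(1-3\alpha)(4-3k)$, and the $\tfrac{9\alpha'}{4(1-3\alpha)}$ branch of \eqref{eq-asspar} is exactly what makes this dominated by the $\alpha'$-contributions, giving $\D_T\mathbf{s_{crit}}\ge0$.

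The main obstacle is the constant-matching in the previous paragraph: the constants $\tfrac{\sqrt2}{4}$ and $\tfrac{2k}{\sqrt{9k^2-12k+8}}$ in $\mathbf{s_{crit}}$ are sharp, so the estimates carry almost no slack and one must \emph{not} bound each cross term at its own worst value of $|\W|$ (doing so already fails at $\alpha=0$). The correct route is to keep the positive term $(-H)|\W|^3$ coupled to the second cross term, which turns the domination into the single homogeneous inequality above, valid on the whole range $k\in[0,\tfrac43]$, $\alpha\in[0,\tfrac13]$. For the equality statement I would trace back which estimates are tight: since $(-H)|\W|^3>0$ whenever $H<0$ and $|\W|\ne0$, and this term strictly exceeds what is needed, equality forces $h=0$ (the case $H=0$) or $|\W|=0$; in the latter case $-H\,\mathbf{s_{crit}}\sqrt g=0$ forces $\mathbf{s_{crit}}=0$, i.e. $\alpha=\tfrac13$, and in every case the surviving term $(\D_T\mathbf{s_{crit}})\sqrt g$ must vanish, which recovers exactly the stated alternatives.
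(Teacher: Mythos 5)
Your proposal is correct and follows the same skeleton as the paper's proof: it starts from the evolution formula \eqref{eq-evoSvark}, derives the same sharp bound $|\mathring{h}|\le\sqrt{2/3}\,(1-3\alpha)(-H)$ (you via the vertex $(c,c,-2c)$ of the constraint polytope, the paper via $|B|^2\le(\operatorname{tr}B)^2$ for $B=h-\alpha Hg\le0$), absorbs the $k'$-term and the $M$-cross-term with the identical constants, and closes with the same $\D_T\mathbf{s_{crit}}\ge0$ computation and the same equality analysis. The one place where you genuinely diverge is the decisive absorption of $(1-3\alpha)|\W||\A|^2$, and here your route is not merely stylistic: the paper dominates this term by $|\W|^3+\tfrac{\sqrt2}{4}(1-3\alpha)^{3/2}|\A|^3$ via the stated ``Young's inequality'' $2ab\le\theta^{-3}a^3+\theta^{3/2}b^{3/2}$, which is false as written (at $\theta=1$, $a=0.7$, $b=1$ it reads $1.4\le1.343$; the sharp coefficient of $b^{3/2}$ is $\tfrac{4\sqrt6}{9}\theta^{3/2}$), and consequently the paper's intermediate estimate in \eqref{eq-eqeq} fails, e.g.\ at $\alpha=0$, $|\W|=|\A|/\sqrt3$, where $\max_{|\W|}\bigl((1-3\alpha)|\W||\A|^2-|\W|^3\bigr)=\tfrac{2\sqrt3}{9}(1-3\alpha)^{3/2}|\A|^3>\tfrac{\sqrt2}{4}(1-3\alpha)^{3/2}|\A|^3$. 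Your version keeps the full compensating term $(-H)\mathbf{s_{crit}}|\R|_{\gbr}^3$ with $|\R|_{\gbr}^3=(|\W|^2+|\A|^2)^{3/2}$ instead of discarding it down to $|\A|^3$, which turns the domination into
\begin{equation*}
w^3+\tfrac{\sqrt2}{4}\sqrt{1-3\alpha}\,(w^2+a^2)^{3/2}\ \ge\ (1-3\alpha)\,a^2w\,,
\end{equation*}
and this inequality \emph{is} true on $\alpha\in[0,\tfrac13]$ (worst case $\alpha=0$, minimum at $w\approx0.41\,a$ with margin $\approx0.105\,a^3$). So your argument in fact repairs the constant-matching step on which the paper's own proof stumbles, at the price of one elementary one-variable calculus verification that you assert but do not write out; I checked it and it holds, so there is no gap. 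Your Cauchy--Schwarz treatment of the quadratic cross term via $|(\E^2)^{\circ}|=|\E|^2/\sqrt6$ reproduces exactly the paper's constant $(1-3\alpha)H|\W|^2$ obtained there by the operator bound $\mathring{h}\le\tfrac{3\alpha-1}{3}Hg$ against the positive semidefinite matrix $\W_{TiTj}\W_{TiTl}$, so that difference is cosmetic.
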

\begin{proof} From the $\alpha$-expanding assumption we have 
$$
-24\mathring{h}_{jl}\W_{TiTj}\W_{TiTl}\geq 8(1-3\alpha) H |\W_{TiTj}|^2 = (1-3\alpha) H |W|^2\,,
$$
since $|\W|^2=|\W_{\a\b\g\d}|^2=8|\W_{TiTj}|^2$. Moreover, since $B:=h-\alpha H g \leq 0$, one has $|B|^2 \leq |\text{tr}B|^2$, i.e.
$$
|h-\alpha H g|^2 \leq (1-3\alpha)^2 H^2\,,
$$
or equivalently
$$
|h|^2 \leq (6\alpha^2-4\alpha+1)H^2 \,.
$$
Recalling that $H\leq 0$ and $\alpha\leq\frac13$ we obtain the following estimate
$$
|\mathring{h}| \leq -\sqrt{\frac23}(1-3\alpha) H \,.
$$
In particular, we get
$$
|\mathring{h}_{ij}\W_{TiTj}| \leq - \sqrt{\frac23}(1-3\alpha) H |\W_{TiTj}| = - \frac{1-3\alpha}{2\sqrt{3}} H |\W|
$$
From equation \ref{eq-evoSvark}, we obtain
\begin{align}\label{eq-eqeq3}
\D_T \S &= \frac{|\W| |\A|^2}{|\R|_{\gbr}^3}\Big[-kH-H\frac{|\W|^2}{|\A|^2} - 24 \frac{\mathring{h}_{jl}\W_{TiTj}\W_{TiTl}}{|\W|^2}+4k M \frac{\mathring{h}_{ij}\W_{TiTj}}{|\W|^2}\\ \nonumber&\quad-H\frac{k'(3k-2)M^2}{|\A|^2}\Big]\sqrt{g} \\\nonumber
&\geq \frac{|\W| |\A|^2}{|\R|_{\gbr}^3}\Big[k-1+3\alpha+\frac{|\W|^2}{|\A|^2} -\frac{2k(1-3\alpha)}{\sqrt{3}}\frac{M}{|\W|}\\\nonumber &\quad+\frac{k'(3k-2)M^2}{|\A|^2}\Big](-H)\sqrt{g} \\\nonumber
&= \frac{|\W|}{3|\R|_{\gbr}^3}\Big[(k-1+3\alpha)(9k^2-12k+8)+3k'(3k-2)\Big]M^2(-H)\sqrt{g} \\\nonumber
&\quad+\frac{1}{|\R|_{\gbr}^3}\Big[|\W|^3-\frac{2k(1-3\alpha)}{\sqrt{3}}M|\A|^2 \Big](-H)\sqrt{g} 
\end{align}
Since 
$$
\frac13M^2=\frac{1}{9k^2-12k+8}|\A|^2\,,
$$
we get
\begin{align*}
\D_T \S &= \frac{1}{|\R|_{\gbr}^3}\Big[|\W|^3-(1-3\alpha)|\W||\A|^2-\frac{2k(1-3\alpha)}{\sqrt{9k^2-12k+8}}|\A|^3\Big](-H)\sqrt{g}\\
&\quad+\frac{ M^2 |\W|}{3|\R|_{\gbr}^3}\Big[k(9k^2-12k+8)+3k'(3k-2)\Big](-H)\sqrt{g}.    
\end{align*}
Using the assumption \eqref{eq-asspar} 
$$
0\leq k' \leq \frac{k(9k^2-12k+8)}{3(4-3k)}
$$
it is easy to see that
\begin{equation}\label{eq-eqeq2}
k(9k^2-12k+8)+3k'(3k-2) \geq k(9k^2-12k+8)+3k'(3k-4) \geq 0
\end{equation}
obtaining
\begin{align}\label{eq-eqeq22}
\D_T \S \geq \frac{1}{|\R|_{\gbr}^3}\Big[|\W|^3-(1-3\alpha)|\W||\A|^2-\frac{2k(1-3\alpha)}{\sqrt{9k^2-12k+8}}|\A|^3\Big](-H)\sqrt{g},      
\end{align}
To estimate $|\W||\A|^{2}$, when $\alpha\neq\frac13$, we use Young's inequality
$$
2 a b \leq \frac{1}{\theta^3}a^3 + \theta^{\frac32}b^{\frac32}
$$
which holds for all nonnegative numbers $a,b\geq 0$ and all $\theta>0$. Choosing
$$
\theta^3=\frac{1-3\alpha}{2}
$$
and using that $|\A|^3\leq |\R|_{\gbr}^3-|\W|^3$ (since $|\R|_{\gbr}^2=|\A|^2+|\W|^2$), we get
\begin{align}\label{eq-eqeq}\nonumber
\D_T \S &\geq \frac{1}{|\R|_{\gbr}^3}\left[|\W|^3-(1-3\alpha)|\W||\A|^2-\frac{2k(1-3\alpha)}{\sqrt{9k^2-12k+8}}|\A|^3\right](-H)\sqrt{g} \\\nonumber
&\geq -\sqrt{1-3\alpha}\left(\frac{\sqrt{2}}{4}+2k\sqrt{\frac{1-3\alpha}{9k^2-12k+8}}\right)\frac{|\A|^3}{|\R|_{\gbr}^3}(-H)\sqrt{g} \\
&\geq -\mathbf{s_{crit}}(-H)\sqrt{g} \\\nonumber
&=-\D_T \left(\mathbf{s_{crit}}\sqrt{g}\right) +\D_T\left(\mathbf{s_{crit}}\right)\sqrt{g}.
\end{align}
We claim that the assumption on the fluid parameters \eqref{eq-asspar} implies $\D_T\left(\mathbf{s_{crit}}\right)\geq 0$. In fact, let $A:=\sqrt{1-3\alpha}\geq 0$. Then, since $\alpha'\geq 0$, where $A\neq 0$, we have
$$
A'=-\frac{3\alpha'}{2\sqrt{1-3\alpha}}=-\frac{3\alpha'}{2(1-3\alpha)}A\leq 0\,.
$$
Then
\begin{align*}
\mathbf{s'_{crit}} &=\frac{\sqrt{2}}{4}A'+\frac{4kAA'}{\sqrt{9k^2-12k+8}}+2A^2\left(\frac{2k}{\sqrt{9k^2-12k+8}}\right)'\\
&\leq \frac{4A^2}{\sqrt{9k^2-12k+8}}\left(-\frac{3k\alpha'}{1-3\alpha}+\frac{4k'(4-3k)}{9k^2-12k+8}\right)\leq 0
\end{align*} 
if 
$$
0\leq k' \leq 
\frac{k(9k^2-12k+8)}{3(4-3k)}\left(\frac{9\alpha'}{4(1-3\alpha)}\right).
$$
Thus, from \eqref{eq-asspar} we get $\D_T \mathbf{s_{crit}}\geq 0$. Therefore, we proved that
$$
\D_T \S^{\bf{pf}}=\D_T\S+ \D_T \left(\mathbf{s_{crit}}\sqrt{g}\right) \geq 0.
$$

We verify now the equality case. Clearly, if at some point $\D_T \mathbf{s_{crit}}=0$ and $h=0$, then the quantity $\S^{\bf{pf}}=\S+\mathbf{s_{crit}}\sqrt{g}$ must be constant in time. Moreover, if $\D_T \mathbf{s_{crit}}=0$, $|\W|=0$ and $\alpha=\frac13$ ($\mathbf{s_{crit}}=0$), $\D_{T}\S^{\bf{pf}}=\D_{T}\S=0$. On the other hand, if equality holds in all the estimates of Theorem \ref{t-mon}, then from equation \eqref{eq-eqeq} we get that $\D_T\left(\mathbf{s_{crit}}\right)=0$ and either $\alpha=\frac13$ or $H=0$ or $|\A|^{2}=|\R|_{\gbr}^{2}$, i.e. $|\W|^{2}=0$. Since by the $\alpha$-expanding assumption the second fundamental form is nonpositive $h\leq 0$, then $H=0$ is equivalent to $h=0$. Thus either $\alpha=\frac13$ or $h=0$ or $|\W|=0$. Moreover, if $|\W|=0$ (and $H\neq 0$), then \eqref{eq-eqeq3} implies $\D_{T}\S=0$ and the assumption $\D_{T}\S^{\bf{pf}}=0$ gives also 
$$
0=\D_T \left(\mathbf{s_{crit}}\right) = H \,\mathbf{s_{crit}}\,,
$$
i.e. $\mathbf{s_{crit}}=0$ and $\alpha=\frac13$. 
\end{proof}
%
%
%
%
%
%

\begin{remark} The critical Weyl entropy density 
$$
\mathbf{s_{crit}}:=\sqrt{1-3\alpha}\left(\frac{\sqrt{2}}{4}+2k\sqrt{\frac{1-3\alpha}{9k^2-12k+8}}\right)
$$
in the following special eras of cosmic evolution is given by

\begin{itemize}

\item {\em Radiation Dominated:} $k=\frac43$

$$
\mathbf{s_{crit}}=\sqrt{1-3\alpha}\left(\frac{\sqrt{2}}{4}+\frac{2\sqrt{2}}{3}\sqrt{1-3\alpha}\right)\,.
$$

\item {\em Matter Dominated:} $k=1$

$$
\mathbf{s_{crit}}=\sqrt{1-3\alpha}\left(\frac{\sqrt{2}}{4}+\frac{2}{\sqrt{5}}\sqrt{1-3\alpha}\right)\,.
$$

\item {\em Vacuum Energy Dominated:} $k=0$

$$
\mathbf{s_{crit}}\sim\sqrt{1-3\alpha}\left(\frac{\sqrt{2}}{4}\right)\,.
$$

\end{itemize}
\end{remark}

\

\section{A Special Class of Pure Electric Spacetimes}\label{s-gencase}

Let $(\X,\gb)$ be a spacetime where the metric takes the form
$$
\gb=-N^2(x,t)dt^2+g_{ij}(x,t)dx^i dx^j 
$$
with $N>0$ and
\begin{equation}\label{eq-qstat}
g_{ij}(t,x)=e^{2\sigma(t,x)} \bar{g}_{ij}(x)
\end{equation}
for a given positive function $\sigma:I\times M^3\to\RR$. In this case the foliation is totally umbilical, i.e.
\begin{equation}\label{eq-umb}
h_{ij} = \frac{H}{3}g_{ij}\,.
\end{equation}
From equations \eqref{eq-ricvac}, we obtain
\begin{align*}
 N R_{ij} &= \nabla_i \nabla_j N + \left(N \L+\frac13 \dot{H}-\frac19N H^2\right) g_{ij} \\
0 &= \nabla_j H-\nabla_k h_{jk} \\
\Delta N &= -\L N - \dot{H} - \frac13 N H^2 \,.
\end{align*}
From the second equation and \eqref{eq-umb} we obtain
$$
0 = \nabla_j H-\nabla_k h_{jk} = \frac{2}{3} \nabla_j H \,,
$$
i.e. the foliation has constant (in space) mean curvature $H\equiv H(t)$ on $M^3$. In particular the second fundamental form $h_{ij}$ is a Codazzi tensor and the spacetime $(\X,\gb)$ satisfies \eqref{H1}, if the stress-energy tensor is diagonal. Thus, if we consider a $k$-perfect fluid spacetime $(\X,\gb)$ of this form, we have that $(\X,\gb)$ is a $k$-perfect fluid electric $\frac13$-expanding region, if we assume $H\leq 0$ or, equivalently, $\p_t \sigma \geq 0$. In this case, since
$$
\mathbf{s_{crit}}=0,
$$
we have $\S^{\bf{pf}}=\S$ and Theorem \ref{t-mon} implies
\begin{proposition} On a $k$-perfect fluid expanding spacetime satisfying \eqref{eq-qstat} the   entropy $\S$ is monotonically increasing.
\end{proposition}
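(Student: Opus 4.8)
The plan is to observe that the special metric form \eqref{eq-qstat} forces the foliation to be totally umbilical, which places us in the degenerate boundary case $\alpha=\frac13$ of Theorem \ref{t-mon}, where the correction term $\mathbf{s_{crit}}$ vanishes identically. Because of this degeneracy I would not invoke Theorem \ref{t-mon} verbatim but rather compute directly from Proposition \ref{p-evopf}.

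First I would collect the structural facts already established in this section: the ansatz \eqref{eq-qstat} yields $h_{ij}=\frac{H}{3}g_{ij}$, the mean curvature $H$ is spatially constant, and the perfect-fluid (hence diagonal) stress-energy tensor together with $h$ being a Codazzi tensor gives \eqref{H1}. Thus the region is a $k$-perfect fluid electric region, and the expanding hypothesis $\p_t\sigma\geq 0$ is equivalent to $H\leq 0$, so it is a $\frac13$-expanding region with $\mathring{h}=0$. Since $\alpha=\frac13$, the defining formula gives $\mathbf{s_{crit}}=0$, hence $\S^{\bf{pf}}=\S$ and there is nothing to separate out.

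Next I would apply Proposition \ref{p-evopf} with $\mathring{h}=0$: the two middle terms in \eqref{eq-evoSvark} drop out, leaving
$$
\D_T \S = \frac{|\W||\A|^2}{|\R|_{\gbr}^3}(-H)\left[k+\frac{|\W|^2}{|\A|^2}+\frac{k'(3k-2)M^2}{|\A|^2}\right]\sqrt{g}.
$$
Since $H\leq 0$, the sign of $\D_T\S$ is governed by the bracket. Using $|\A|^2=\frac{9k^2-12k+8}{3}M^2$ and discarding the manifestly nonnegative term $|\W|^2/|\A|^2$, nonnegativity of the bracket reduces to $k(9k^2-12k+8)+3k'(3k-2)\geq 0$, which is precisely estimate \eqref{eq-eqeq2} established in the proof of Theorem \ref{t-mon} under the bound $0\leq k'\leq\frac{k(9k^2-12k+8)}{3(4-3k)}$.

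The main obstacle is purely a matter of bookkeeping: since $\alpha=\frac13$ renders the factor $\frac{9\alpha'}{4(1-3\alpha)}$ in \eqref{eq-asspar} an indeterminate $\tfrac00$, one cannot quote Theorem \ref{t-mon} literally and must instead verify the reduced bracket inequality by hand as above. The only genuine content is the $k'$ bound making the cubic-in-$k$ expression nonnegative; in the standard cosmological situation where $k$ is constant this is immediate, since then $k'=0$ and $k\geq 0$, so no further assumption is needed.
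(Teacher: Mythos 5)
Your argument is correct and follows the same route as the paper: identify that \eqref{eq-qstat} forces $h_{ij}=\frac{H}{3}g_{ij}$, that the constraint equation gives $\nabla_j H=0$ so $h$ is Codazzi and \eqref{H1} holds for a diagonal stress-energy tensor, and that $\p_t\sigma\geq 0$ makes the region $k$-perfect fluid electric $\frac13$-expanding with $\mathbf{s_{crit}}=0$ and hence $\S^{\bf{pf}}=\S$. The only divergence is at the last step: the paper simply invokes Theorem \ref{t-mon}, whereas you re-derive the monotonicity directly from Proposition \ref{p-evopf}, where $\mathring{h}=0$ kills the two middle terms of \eqref{eq-evoSvark} and reduces everything to $k(9k^2-12k+8)+3k'(3k-2)\geq 0$, i.e. to \eqref{eq-eqeq2}. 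This is a legitimate and arguably cleaner specialization; your observation that the hypothesis \eqref{eq-asspar} becomes formally indeterminate when $\alpha\equiv\frac13$ (the factor $\frac{9\alpha'}{4(1-3\alpha)}$ is $\frac{0}{0}$) is a fair criticism of the paper's literal citation of Theorem \ref{t-mon}, although the theorem's proof itself survives in this degenerate case since the only role of that factor is to guarantee $\D_T\mathbf{s_{crit}}\geq 0$, which is automatic when $\mathbf{s_{crit}}\equiv 0$. You also correctly isolate the one hypothesis the proposition tacitly needs, namely $0\leq k'\leq\frac{k(9k^2-12k+8)}{3(4-3k)}$, which the paper leaves implicit and which is vacuous for constant $k$.
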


\

\section{Perfect Fluid Magnetic Regions}

In this section we will consider spacetime $(\X,\gb)$, satisfying
\begin{equation}\label{H2}\tag{H2}
\W_{TiTj}\equiv 0\quad\text{on } X\,.
\end{equation}
As already observed, spacetimes satisfying \eqref{H2} are called {\em Pure Magnetic Spacetime}. Localizing this notion we provide the following 

\begin{definition}\label{d-pfm}  Let $(\X,\gb)=(I\times M^3, -N^2 dt^2+g)$ be an almost globally hyperbolic spacetime, let $U\subset M^3$ and $I'\subset I$ be two empty open sets. We say that $\Pm:=I'\times U$ is a {\em $k$-perfect fluid magnetic region} if 
\begin{itemize}

\item[(1)] the stress-energy tensor $\T$ is given by \eqref{eq-pf} on $\Pm$, with $P=(k-1)M$,  $k\in[0,\frac43]$ and 
$$0\leq k' \leq 
\frac{k(9k^2-12k+8)}{3(4-3k)},$$ where $\D_T k=k'H$ (where $H\neq 0$). 

\item[(2)] the Weyl tensor $\W$ satisfies \eqref{H2} on $\Pm$;

\item[(3)] the lapse function $N$ is strictly positive on $\Pm$.

\end{itemize}

Moreover, if there exists a function $\alpha=\alpha(t,x)\in\left[0,\frac13\right]$ such that
$$
h_{ij} \leq \alpha H g_{ij} \leq 0 \quad \text{on}\quad I'\times U\,,
$$
we will say that $\Pma$ is a $k$-perfect fluid magnetic $\alpha$-expanding region.
\end{definition}

\begin{lemma}\label{l-evM} Let $(\X,\gb)$ be a globally hyperbolic spacetime satisfying \eqref{H2}. Then 
\begin{align*}
\frac12\D_T|\W_{\a\b\g\d}|^2 =-16h_{kp}\W_{Tijk}\W_{Tijp}\,.
\end{align*}
\end{lemma}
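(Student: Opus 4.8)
The plan is to compute $\D_T |\W_{\a\b\g\d}|^2$ in the pure magnetic case, following exactly the same strategy that produced the electric-region formula \eqref{eq-grapfgen}. In the magnetic case \eqref{H2} holds, so $\W_{TiTj}\equiv 0$, and by \eqref{eq-nw} the Riemannian and Lorentzian norms satisfy $|\W_{\a\b\g\d}|^2=-4|\W_{Tijk}|^2$. Thus the evolution of $|\W|^2$ is governed entirely by the time derivative of $|\W_{Tijk}|^2$, whereas in the electric case it was governed by $|\W_{TiTj}|^2$. I expect the author to have an analogue of the (unstated but referenced) Proposition~\ref{p-firnv} giving $\D_T |\W_{\a\b\g\d}|^2$ as a universal expression involving the second fundamental form $h$, the Weyl components, and Cotton-tensor terms; the magnetic hypothesis will kill the $\W_{TiTj}$-contributions and leave only the $\W_{Tijk}$ ones.

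Concretely, I would start from the second Bianchi identity for the Weyl tensor \eqref{eq-sbinv}, contract it appropriately with $T$ and with the metric, and use it to express $\D_T \W_{Tijk}$ in terms of spatial derivatives of $\W$, the Cotton tensor $\C$, and curvature contractions. Then I would form $\D_T |\W_{Tijk}|^2 = 2\,\W_{Tijk}\,\D_T\W_{Tijk}$, with the Riemannian norm understood, and simplify. The key structural input is the relation between the covariant time derivative of a spatial Weyl component and $h$: differentiating a component of the form $\W(T,\p_i,\p_j,\p_k)$ brings down terms where $\D_T$ hits the slots, producing contractions of $\W$ against $h$ (since the connection coefficients $\Gb_{Ti}^j$ and $\Gb_{iT}^T$ are built from $h$ and $N$), plus genuine Bianchi/Cotton terms. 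On a perfect fluid satisfying \eqref{H2}, the Cotton-tensor contributions should organize themselves so that the surviving terms collapse to the single contraction $-16\,h_{kp}\W_{Tijk}\W_{Tijp}$.

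The main obstacle I anticipate is the bookkeeping of which terms cancel. In the electric formula \eqref{eq-grapfgen} there were three terms: an $H|\W_{TiTj}|^2$ term, a $h_{jl}\W_{TiTj}\W_{TiTl}$ term, and a matter term $(M+P)h_{ij}\W_{TiTj}$ coming from the Cotton tensor. For the magnetic case the analogue computation must, by the stated lemma, produce \emph{only} a term of the type $h_{kp}\W_{Tijk}\W_{Tijp}$, with no leftover $H|\W|^2$ piece and no matter ($M$, $P$) contribution. I would therefore verify carefully that the Cotton-tensor terms—which for a perfect fluid are given explicitly by $\C_{ijT}=-(M+P)h_{ij}+\tfrac13(\D_T M)g_{ij}$—contract against the $\W_{Tijk}$ components in such a way that all matter contributions vanish. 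The vanishing of these terms is plausible because $\W_{Tijk}$ is trace-free in its spatial indices and antisymmetric in the last pair, so contractions against pure-trace objects like $g_{ij}$ or against the symmetric perfect-fluid structure drop out; the analogous $H$-term should vanish for the same algebraic reason.

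Once the Bianchi-based expression for $\D_T\W_{Tijk}$ is in hand, the remaining computation is routine index manipulation: substitute, use $\W_{TiTj}=0$ to discard the electric terms, use the symmetries of $\W_{Tijk}$ to eliminate the matter and mean-curvature contributions, and collect the single surviving quadratic-in-$\W$, linear-in-$h$ term. The factor $-16$ and the sign should then fall out automatically, matching the magnetic sign convention in \eqref{eq-sss} where $|\W_{\a\b\g\d}|_{\gbr}=-|\W_{\a\b\g\d}|$. I would cross-check the numerical coefficient against the electric case by noting that both derive from the same underlying Proposition~\ref{p-firnv}, so the coefficient $16$ appearing there on the $|\W_{TiTj}|^2$ term should reappear consistently here attached to the $h_{kp}\W_{Tijk}\W_{Tijp}$ contraction.
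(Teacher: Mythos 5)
Your plan is correct and follows essentially the same route as the paper: contract the second Bianchi identity \eqref{eq-sbinv} with $T$ to get $\D_T\W_{Tijk}=\D_j\W_{TiTk}-\D_k\W_{TiTj}$, use \eqref{H2} to reduce the spatial covariant derivatives of $\W_{TiTj}$ to pure Christoffel (i.e.\ $h$-contraction) terms, and contract against $\W_{Tijk}$ using $|\W_{\a\b\g\d}|^2=-4|\W_{Tijk}|^2$. The only slight misdirection is in the Cotton bookkeeping: the components that enter here are $\C_{TkT}$ and $\C_{ijk}$ (not $\C_{ijT}$), and in the paper they vanish identically for the perfect fluid because $\D_kM=\D_kP=0$, rather than through trace-freeness of $\W_{Tijk}$ after contraction.
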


\begin{proof}
From the second bianchi identity \eqref{eq-sbinv} 
\begin{align*}
\D_T \W_{Tijk}+\D_{j} \W_{TikT}+\D_{k} \W_{TiTj} &= \frac12 \left(\C_{TkT}\gb_{ij}+\C_{TTj}\gb_{ik}+\C_{Tjk}\gb_{iT}\right) \\ 
&\quad-\frac12 \left(\C_{ikT}\gb_{Tj}+\C_{iTj}\gb_{Tk}+\C_{ijk}\gb_{TT}\right) \\
&=\frac12 \left(\C_{TkT}g_{ij}-\C_{TjT}g_{ik}+\C_{ijk}\right)\,.
\end{align*}
Also, \eqref{eq-cot} and the fact that $\D_i M = 0$ imply
\begin{align*}
\C_{TkT} &:=  \D_{T}\T_{kT} - \D_{k}\T_{TT}-
\frac{1}{3}  \left( \D_{T}\T  \gb_{kT} -  \D_{k}\T
\gb_{TT} \right) \\
&= -\frac 13 \D_k \T = \frac{4-3k}{3}\D_k M = 0 \,
\end{align*}
and
\begin{align*}
\C_{ijk} &:=  \D_{k}\T_{ij} - \D_{j}\T_{ik}-
\frac{1}{3}  \left( \D_{k}\T  \gb_{ij} -  \D_{j}\T
\gb_{ik} \right) \\
&= \D_{k}\T_{ij} - \D_{j}\T_{ik} =0\,.
\end{align*}
Thus
$$
\D_T \W_{Tijk} = - \D_{j} \W_{TikT}-\D_{k} \W_{TiTj}\, = \D_{j} \W_{TiTk}- \D_{k} \W_{TiTj}\,.
$$
Moreover, condition \eqref{H2} implies
\begin{align*}
\D_{k} \W_{TiTj} &= \p_{k}\W_{TiTj} - \Gb_{kT}^p \W_{piTj}-\Gb_{kT}^{p}\W_{Tipj}\\
&=  - \Gb_{kT}^p \W_{Tjpi}-\Gb_{kT}^{p}\W_{Tipj} \\
&=h_{kp}(\W_{Tjpi}+\W_{Tipj})\,.
\end{align*}
Rewriting the last equations we have proved that
\begin{equation*}
\D_T \W_{Tijk} = h_{jp}(\W_{Tkpi}+\W_{Tipk})-h_{kp}(\W_{Tjpi}+\W_{Tipj})\,.
\end{equation*}
Thus, from Lemma \ref{l-for}, we get
\begin{align*}
\frac12\D_T|\W_{\a\b\g\d}|^2 &= -2 \D_T |\W_{Tijk}|^2 = -4 \W_{Tijk} \D_T \W_{Tijk} \\
&= -4 h_{jp}(\W_{Tkpi}+\W_{Tipk})\W_{Tijk}+4h_{kp}(\W_{Tjpi}+\W_{Tipj})\W_{Tijk} \\
&= 8 h_{kp}(\W_{Tjpi}+\W_{Tipj})\W_{Tijk}\\
&=-16h_{kp}\W_{Tijk}\W_{Tijp}\,.
\end{align*}

\end{proof}

\begin{proposition}\label{c-entestM} Let $(\X,\gb)$ be a globally hyperbolic perfect fluid satisfying \eqref{H2}. Then the   Weyl entropy $\S$ satisfies
\begin{align*}
\D_T \S &= \frac{|\A|^3}{|\R|_{\gbr}^3}\Big[ 16 \frac{h_{jl}\W_{Tijk}\W_{Tijp}}{|\W||\A|}\\ &\quad+H\frac{|\W|}{3|\A|^3}\left[k(9k^2-12k+8)+3k'(3k-2)\right]M^2+H \frac{|\W||\R|_{\gbr}^2}{|\A|^3}\Big]\sqrt{g} 
\end{align*}
\end{proposition}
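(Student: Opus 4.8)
The plan is to run the proof of Proposition~\ref{p-evopf} verbatim in the pure magnetic setting, feeding in Lemma~\ref{l-evM} in place of the electric evolution formula~\eqref{eq-grapf}. Writing $\S=\mathbf{s}\sqrt{g}$ with $\mathbf{s}=|\W_{\a\b\g\d}|_{\gbr}/|\R_{\a\b\g\d}|_{\gbr}$ and $\bar{\mathbf{s}}=|\W_{\a\b\g\d}|_{\gbr}/|\A_{\a\b\g\d}|$, I would first differentiate by the product rule together with the chain rule that follows from \eqref{ge-wge}, namely $\D_T\mathbf{s}=(\mathbf{s}/\bar{\mathbf{s}})^3\,\D_T\bar{\mathbf{s}}$, so that $\D_T\S=(\mathbf{s}/\bar{\mathbf{s}})^3(\D_T\bar{\mathbf{s}})\sqrt{g}+\mathbf{s}\,\D_T\sqrt{g}$. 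By \eqref{ge-wge} one has $(\mathbf{s}/\bar{\mathbf{s}})^3=(|\A|/|\R|_{\gbr})^3$, and by \eqref{eq-evdet} $\D_T\sqrt{g}=-H\sqrt{g}$; the whole computation thus reduces to evaluating $\D_T\bar{\mathbf{s}}=\D_T(|\W|/|\A|)=|\A|^{-2}\big(|\A|\,\D_T|\W|-|\W|\,\D_T|\A|\big)$.

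Two ingredients feed this last expression. The evolution of $|\A|$ is unchanged from the electric setting, since $|\A|^2=\frac{9k^2-12k+8}{3}M^2$ depends only on the fluid data: using $\D_T M=kHM$ (see~\eqref{eq-evmass}) and $\D_T k=k'H$ one obtains, exactly as in Proposition~\ref{p-evopf}, $\D_T|\A|=\frac{k(9k^2-12k+8)+3k'(3k-2)}{3|\A|}H M^2$, which produces the term $H\,|\W|\,[\,k(9k^2-12k+8)+3k'(3k-2)\,]M^2/(3|\A|^3)$. The genuinely new ingredient is $\D_T|\W|$, supplied by Lemma~\ref{l-evM}: because \eqref{H2} holds, \eqref{eq-nw} gives $|\W_{\a\b\g\d}|^2=-4|\W_{Tijk}|^2$, so Lemma~\ref{l-evM} combined with \eqref{eq-sss} yields $\frac12\D_T|\W_{\a\b\g\d}|_{\gbr}^2=16\,h_{kp}\W_{Tijk}\W_{Tijp}$ and hence $\D_T|\W|=16\,h_{kp}\W_{Tijk}\W_{Tijp}/|\W|$, the source of the first term $16\,h_{kp}\W_{Tijk}\W_{Tijp}/(|\W||\A|)$. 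The key structural observation, which explains why the magnetic formula lacks the term $H|\W|/|\A|$ present in Proposition~\ref{p-evopf}, is that the right-hand side of Lemma~\ref{l-evM} contains no multiple of $H|\W_{Tijk}|^2$: in the wave-like case the norm of the Weyl tensor evolves purely through the contraction $h_{kp}\W_{Tijk}\W_{Tijp}$, with no direct mean-curvature contribution.

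Assembling these pieces, multiplying by $(|\A|/|\R|_{\gbr})^3$, and adding the determinant contribution $\mathbf{s}\,\D_T\sqrt{g}$, which after \eqref{eq-sss} yields $H\,|\W|\,|\R|_{\gbr}^2/|\A|^3$, reproduces the three terms of the stated identity. I expect the main obstacle to be purely one of sign bookkeeping: since in the pure magnetic case \eqref{eq-sss} relates the Riemannian norm to the Lorentzian contraction with a minus sign, $|\W_{\a\b\g\d}|_{\gbr}=-|\W_{\a\b\g\d}|$, the signs of the $|\A|$-driven and determinant-driven contributions are reversed relative to the electric computation, which is precisely what accounts for the $+H$ signs appearing in the statement (as opposed to the $-H$ signs in the proof of Proposition~\ref{p-evopf}); tracking these signs consistently through the single scalar quantity $|\W|$ is the delicate point. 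Finally, the degenerate locus $\A=0$, i.e.\ $M=0$, should be treated separately exactly as in Proposition~\ref{p-evopf}: there $\mathbf{s}=1$ attains its maximum, $\D_T\mathbf{s}=0$, and one checks that the formula collapses to $\D_T\S=-H\sqrt{g}$, consistently with \eqref{eq-coin}.
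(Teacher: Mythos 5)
Your proposal is correct and follows essentially the same route as the paper's own proof: differentiate $\S=\mathbf{s}\sqrt{g}$ via $\D_T\mathbf{s}=(\mathbf{s}/\bar{\mathbf{s}})^3\D_T\bar{\mathbf{s}}$, reuse the electric-case evolution of $|\A|$, feed in Lemma \ref{l-evM} for $\D_T|\W|$, and track the sign reversal coming from $|\W|_{\gbr}=-|\W|$ in \eqref{eq-sss}. Your structural remark that the absence of an $H|\W_{Tijk}|^2$ term in Lemma \ref{l-evM} is what removes the $H|\W|^2/|\A|^2$ contribution present in the electric case is exactly the right diagnosis.
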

\begin{proof} Using \eqref{ge-wge} and \eqref{eq-nw} we obtain
$$
\D_T \S = \D_{T}\left(\mathbf{s}\sqrt{g}\right)=\left(\frac{\mathbf{s}}{\bf{\bar{s}}}\right)^3\D_T \mathbf{\bar{s}}\,\sqrt{g} + \mathbf{s}\,\D_T \sqrt{g} \,.
$$
Since, from Lemma \ref{l-for} $|\A|_{\gbr}^2=|\A|^2$ and, from \eqref{eq-rm2pf} and \eqref{eq-evA}, we have
$$
|\A|^2=\frac{9k^2-12k+8}{3}M^2
$$
and
$$
\D_T |\A| = \frac{k(9k^2-12k+8)+3k'(3k-2)}{3|\A|} H\,M^2\,.
$$
Thus, from \eqref{eq-sss}, Lemma \ref{l-for} and Lemma \ref{l-evM} we obtain
\begin{align*}
\D_T \bf{\bar{s}}&=-\D_T \left(\frac{|\W|}{|\A|}\right) = -\frac{1}{|\A|^2}\left(|\A| \D_T |\W| - |\W| \D_T |\A|\right) \\
&=-\frac{1}{|\A|^2} \Big[ \frac{|\A|}{|\W|}\left(-16h_{kp}\W_{Tijk}\W_{Tijp} \right)\\ &\quad-H\frac{|\W|}{3|\A|}\left[k(9k^2-12k+8)+3k'(3k-2)\right]M^2\Big] \\
&= 16 \frac{h_{jl}\W_{Tijk}\W_{Tijp}}{|\W||\A|}\\ &\quad+H\frac{|\W|}{3|\A|^3}\left[k(9k^2-12k+8)+3k'(3k-2)\right]M^2 \,.
\end{align*}
From \eqref{eq-evdet}, one has
\begin{align*}
\D_T \sqrt{g} =  - H \sqrt{g}\,.
\end{align*}
Using Lemma \ref{l-for}, we get
\begin{align*}
\D_T \S &= \frac{|\A|^3}{|\R|_{\gbr}^3}\Big[ 16 \frac{h_{jl}\W_{Tijk}\W_{Tijp}}{|\W||\A|}\\ &\quad+H\frac{|\W|}{3|\A|^3}\left[k(9k^2-12k+8)+3k'(3k-2)\right]M^2+H \frac{|\W||\R|_{\gbr}^2}{|\A|^3}\Big]\sqrt{g} 
\end{align*}
and this concludes the proof.
\end{proof}

As a corollary, we have the following monotonicity of $\S$ on perfect fluid magnetic $\alpha$-expanding regions. 

\begin{theorem}\label{teo-mainM} Let $\Pma$ be a $k$-perfect fluid magnetic $\alpha$-expanding region. Then,
$$
\D_T \S \geq 0\,.
$$
Moreover, the equality holds at some point if and only  if either $h=0$ or $|\W|=0$. 
\end{theorem}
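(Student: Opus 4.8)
The plan is to read off the sign of the evolution formula for $\S$ furnished by Proposition \ref{c-entestM}, which on the locus $\A\neq 0$ reads
\[
\D_T \S = \frac{|\A|^3}{|\R|_{\gbr}^3}\Big[ 16 \frac{h_{kp}\W_{Tijk}\W_{Tijp}}{|\W||\A|}+H\frac{|\W|}{3|\A|^3}\big(k(9k^2-12k+8)+3k'(3k-2)\big)M^2+H \frac{|\W||\R|_{\gbr}^2}{|\A|^3}\Big]\sqrt{g}.
\]
Since the prefactor $\frac{|\A|^3}{|\R|_{\gbr}^3}\sqrt{g}$ is nonnegative, it suffices to prove that each of the three bracketed terms is nonnegative. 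I would first dispose of the locus $\A=0$ (equivalently $M=0$): there $\mathbf{s}=1$, so $\S=\sqrt{g}$ and $\D_T\S=-H\sqrt{g}\geq 0$ directly from \eqref{eq-evdet}, exactly as in \eqref{eq-coin}.

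The sign analysis then rests on four elementary facts. The $\alpha$-expanding hypothesis $h_{ij}\leq\alpha H g_{ij}\leq 0$ gives at once that $h\leq 0$ as a symmetric form and, upon tracing and using $1-3\alpha\geq 0$, that $H\leq 0$; note that the precise value of $\alpha$ never enters, which is why no $\mathbf{s_{crit}}$ correction is needed here. Next, the matrix $G_{kp}:=\W_{Tijk}\W_{Tijp}$ (summed over $i,j$) is a Gram matrix, hence positive semidefinite, so $h_{kp}\W_{Tijk}\W_{Tijp}=\operatorname{tr}(hG)\leq 0$. Under \eqref{H2} the convention \eqref{eq-sss} gives $|\W|=-|\W|_{\gbr}\leq 0$. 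Finally, the admissibility bound $0\leq k'\leq\frac{k(9k^2-12k+8)}{3(4-3k)}$ built into the definition of $\Pma$ yields $k(9k^2-12k+8)+3k'(3k-2)\geq 0$, precisely as in \eqref{eq-eqeq2}. Feeding these in: the first term is $\operatorname{tr}(hG)\leq 0$ divided by $|\W||\A|\leq 0$, hence $\geq 0$; the second is a product of the nonpositive factors $H,|\W|$ with the nonnegative factors $k(9k^2-12k+8)+3k'(3k-2)$ and $M^2$, hence $\geq 0$; and the third carries the nonnegative factor $H|\W|$, hence $\geq 0$. Therefore $\D_T\S\geq 0$.

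For the equality statement I would observe that $\D_T\S=0$ forces all three nonnegative terms to vanish, and that the third term governs the dichotomy: on the region $\A\neq 0$ one has $|\A|>0$ and $|\R|_{\gbr}^2>0$, so the third term vanishes iff $H|\W|=0$, i.e. iff $H=0$ or $|\W|=0$. If $|\W|=0$ all three terms vanish, the first continuously because $G_{kp}=O(|\W|^2)$ cancels the $|\W|$ in the denominator. If instead $H=0$, then the nonpositive tensor $h$ has vanishing trace and must itself vanish, $h=0$, so again every term is zero (and on the locus $\A=0$ the relation $\D_T\S=-H\sqrt g=0$ likewise forces $H=0$, hence $h=0$). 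Conversely both $h=0$ and $|\W|=0$ plainly give $\D_T\S=0$, so equality holds iff $h=0$ or $|\W|=0$. The only genuinely delicate point --- the main obstacle --- is bookkeeping the sign conventions for $|\W|$ in \eqref{eq-sss}, where the Lorentzian $|\W|$ is \emph{nonpositive} in the magnetic case, together with interpreting Proposition \ref{c-entestM} by continuity across the degenerate loci $\A=0$ and $|\W|=0$; once these are handled, the conclusion follows from the term-by-term signs with no further estimates.
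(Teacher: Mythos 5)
Your proof is correct and follows essentially the same route as the paper: both start from the evolution formula of Proposition \ref{c-entestM} and conclude by a sign analysis using $H\leq 0$, $h\leq 0$, the magnetic convention $|\W|\leq 0$ from \eqref{eq-sss}, and the bound \eqref{eq-eqeq2} on $k'$. The only (harmless) differences are that you show each of the three bracketed terms is separately nonnegative via the Gram-matrix observation, whereas the paper groups the first and third terms into a single $(1+4\alpha)H|\W|/|\A|$ lower bound, and that you spell out the equality case and the degenerate loci $\A=0$, $|\W|=0$, which the paper dismisses with ``follows easily.''
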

\begin{proof} From Proposition \ref{c-entestM} we have
\begin{align*}
\D_T \S &= \frac{|\A|^3}{|\R|_{\gbr}^3}\Big[ 16 \frac{h_{jl}\W_{Tijk}\W_{Tijp}}{|\W||\A|}\\ &\quad+H\frac{|\W|}{3|\A|^3}\left[k(9k^2-12k+8)+3k'(3k-2)\right]M^2+H \frac{|\W||\R|_{\gbr}^2}{|\A|^3}\Big]\sqrt{g} 
\end{align*}
Since $|\W|\leq 0$, $h_{ij}\leq \alpha H g_{ij}\leq 0$, $|\R|_{\gbr}\geq |\A|$,
$$
0\leq k' \leq \frac{k(9k^2-12k+8)}{3(4-3k)}
$$
and $\D_{T}N\geq 0$ we obtain
\begin{align*}
\D_T \S &\geq \frac{|\A|^3}{|\R|_{\gbr}^3}\Big[ (1+4\alpha) H \frac{|\W|}{|\A|}\\ &\quad+H\frac{|\W|}{3|\A|^3}\left[k(9k^2-12k+8)+3k'(3k-2)\right]M^2\Big]\sqrt{g} \\
&\geq \frac{|\A|^3}{|\R|_{\gbr}^3}\Big[ (1+4\alpha) H \frac{|\W|}{|\A|}\Big]\sqrt{g} \geq 0\,,
\end{align*}
since $\alpha\geq 0$ by assumption. The equality case follows easily. 
\end{proof}

\

\section{Regions with Maximum or Minimal Weyl Entropy}\label{s-max}

\begin{figure}\label{fig-2}
\centering
\includegraphics[scale=0.7]{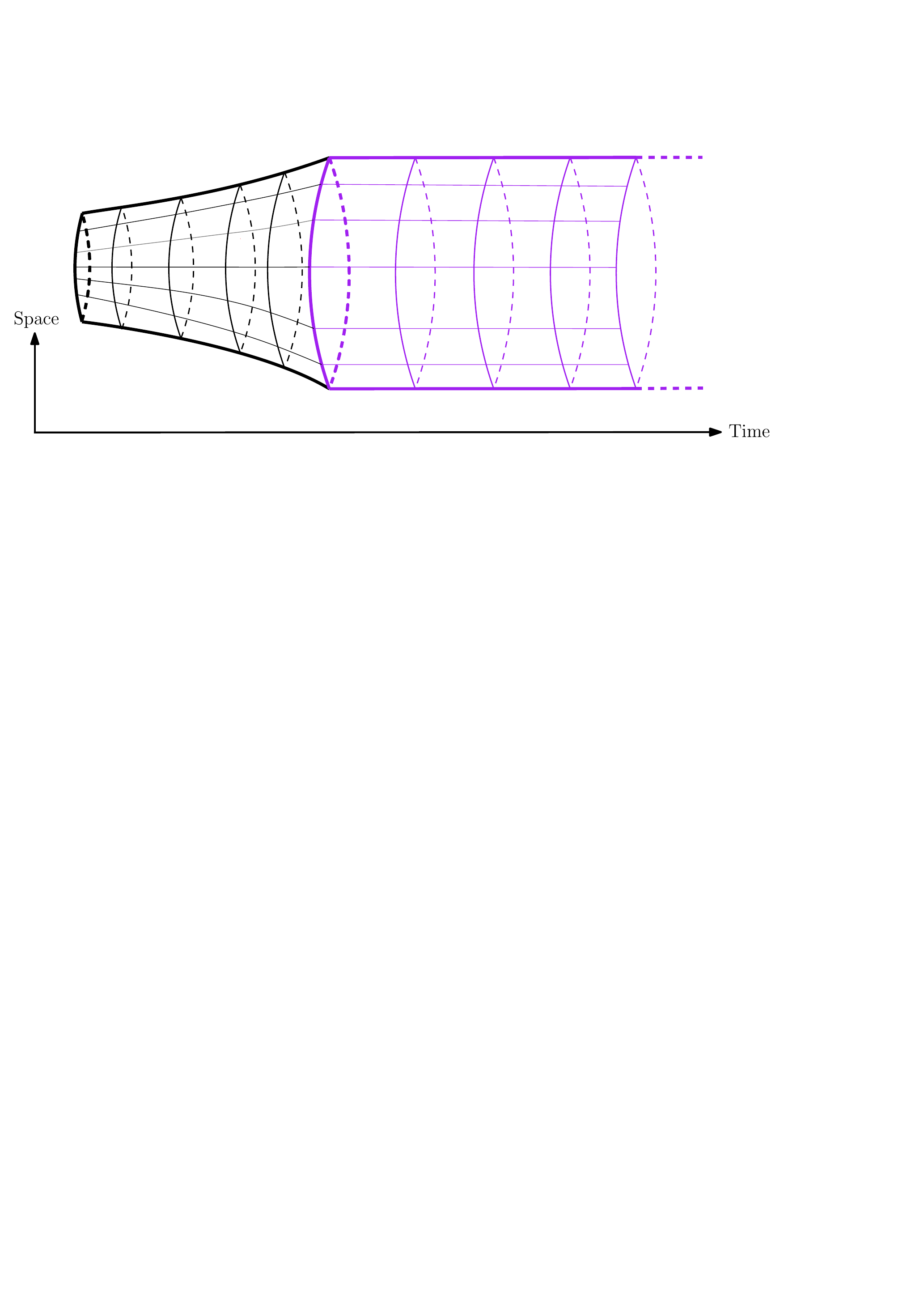}
\caption{A picture of a region with {\em maximum Weyl Entropy} which become static.}
\end{figure}

Let $I\times U$, be a $k$-perfect fluid electric region contained in an almost globally hyperbolic spacetime $(\X, \gb)$ and satisfying \eqref{eq-asspar}. From Theorem \ref{t-mon}, for every $t\in I$ we have
$$
\D_T\S^{\bf{pf}}_{U}=\frac{\text{Area}(\Sigma)}{\operatorname{Vol}_{g}(U)}\int_{U}\D_T\S^{\bf{pf}}+\D_T\left(\frac{\text{Area}(\Sigma)}{\operatorname{Vol}_{g}(U)}\right)\int_{U}\S^{\bf{pf}}\geq \D_T\left(\frac{\text{Area}(\Sigma)}{\operatorname{Vol}_{g}(U)}\right)\int_{U}\S^{\bf{pf}}.
$$
In particular, to guarantees the monotonicity of the Weyl entropy in $U$, we can assume the following 
\begin{equation}\label{eq-assU}
\D_T\left(\frac{\text{Area}(\Sigma)}{\operatorname{Vol}_{g}(U)}\right)\geq 0.
\end{equation}

\subsection{Maximal case.} We say that a time slice $U_{t_0}:=\{t_0\}\times U$, $t_0\in I$, has {\em maximum Weyl entropy} at time $t_0$, if the   Weyl entropy $\S^{\bf{pf}}$ in $U$ is maximal at time $t_0\in I$. We recall that
$$
\S^{\bf{pf}}_{U}:=\frac{\text{Area}(\Sigma)}{\operatorname{Vol}_{g}(U)}\int_{U}\S^{\bf{pf}}=\frac{\text{Area}(\Sigma)}{\operatorname{Vol}_{g}(U)}\int_{U}\left(\frac{|\W|_{\gbr}}{|\R|_{\gbr}}+\mathbf{s_{crit}}\right)\sqrt{g}\,,
$$
if $\R\neq 0$, or 
$$
\S^{\bf{pf}}_{U}=\text{Area}(\Sigma)\left(1+\frac{1}{\operatorname{Vol}_{g}(U)}\int_{U}\mathbf{s_{crit}}\sqrt{g}\right)
$$ 
otherwise. Since $|\W|_{\gbr}\leq |\R|_{\gbr}$ with equality if and only if $\A=0$ (i.e. the Ricci tensor of $\gb$ vanishes), we get
$$
\S^{\bf{pf}}_{U} \leq \text{Area}(\Sigma)\left(1+\frac{1}{\operatorname{Vol}_{g}(U)}\int_{U}\mathbf{s_{crit}}\sqrt{g}\right)\leq \text{Area}(\Sigma)\left(1+ \sup_U \mathbf{s_{crit}}\right) 
$$
with equality if and only if the Ricci tensor $\R_{\a\b}$ vanishes (the case of zero Riemann tensor $\R$ is automatically included) and $\mathbf{s_{crit}}$ is constant on $U$.

Assume now that $U_{t_0}$ has maximum Weyl entropy at time $t_0$. By the monotonicity in Theorem \ref{t-mon}, all the future slices $U_t$ have maximum entropy, for all $I\ni t\geq t_0$. In this case, for all $I\ni t\geq t_0$, the Ricci tensor is zero on $U_{t}$, $\mathbf{s_{crit}}$ is constant on $U_t$ and, from the Einstein equation, the stress-energy tensor $\T$ is zero, i.e. 
$$
\R_{\a\b}=0 \quad\text{and}\quad \T_{\a\b}=0\quad\text{on }U_t\,.
$$

\begin{figure}\label{fig-3}
\centering
\includegraphics[scale=0.6]{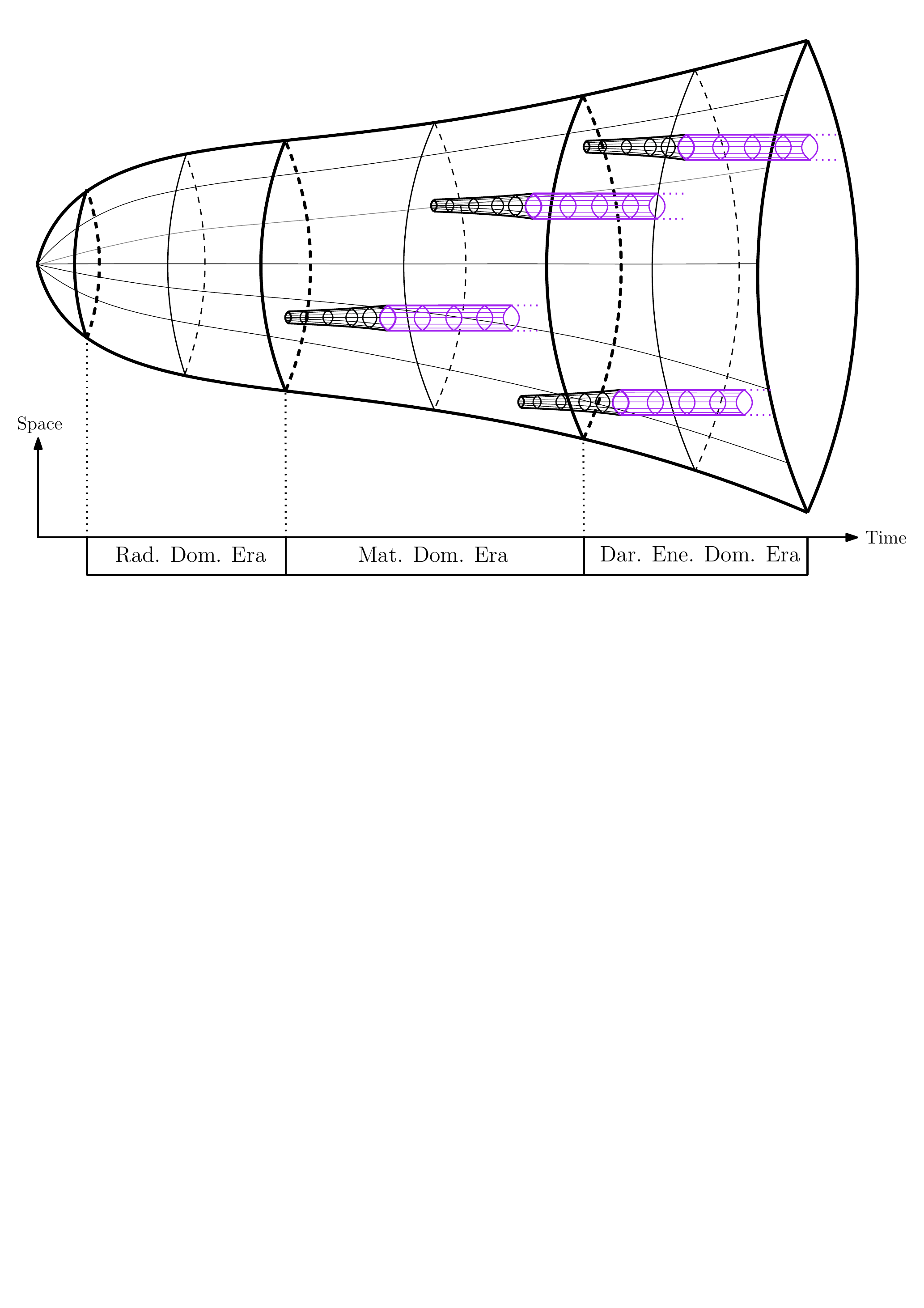}
\caption{A picture of a cluster of regions with {\em maximum Weyl Entropy} which become static.}
\end{figure}

Moreover, by maximality, from the equality case in the monotonicity Theorem \ref{t-mon}, we obtain that, on  $U_{t}$, one has $\D_T \mathbf{s_{crit}}=0$ (and $\mathbf{s_{crit}}=\sup_U \mathbf{s_{crit}}$) and either $h=0$ or $|\W|=0$ and $\alpha=\frac13$ (and $\mathbf{s_{crit}}=0$). In the second case, since it is Ricci flat, the region is flat, i.e. locally isometric to a quotient of the Minkowski Space. In the first case, $h=0$, the space metric $g$ does not depend on $t$ and from equations \eqref{eq-ricvac}, the region $U_{t}$ is {\em static}. More precisely, the triple $(U,g,N)$ satisfies the following system
$$
\begin{cases}
N\,R_{ij} = \nabla_i\nabla_j N  \\
\Delta N = 0 
\end{cases} \quad\text{on } U\,.
$$
In particular $(U, g)$ has zero scalar curvature $R=0$. Note that we can ensure that the lapse function $N$, which a priori could depend on time, is in fact equal to $N(t_0,x)$ on $U_t$, for every $I\ni t\geq t_0$. This follows immediately form the local uniqueness for the Cauchy development satisfying the constraint equations (which in this case is simply $R=0$) on the slice $\{t_0\}\times U$. In both cases, the solution is a static vacuum spacetime in the whole $[t_0,+\infty)\cap I \times U$ (see Figure 2 and Figure 3) and the Weyl entropy $\S^{\bf{pf}}_{U}$ is constant  in time and equal to
$$
\S^{\bf{pf}}_{U}=\text{Area}(\Sigma)\left(1+ \sup_U \mathbf{s_{crit}}\right) \,.
$$
Suppose now that $t\in[t_0,T)$ for some maximal time $T\in \RR^*$. Then, either $T=+\infty$ (and in this case the lapse function must be strictly positive everywhere) or $T<+\infty$. In this second case, assume that there exists a point $x\in \bar{U}_t$ such that $\lim_{t\to T} N(t, x)=0$. Since $N(t,x)=N(t_0,x)$, this implies that $t_0$ actually has to be the maximal time of existence $T$. Since $N(t_0,\cdot)>0$ in $U$, the point $x\in\Sigma=\partial U$. By Hopf lemma, since the lapse function $N$ is nonnegative on $U_T^c$, the function $N$ has to be identically zero on the boundary $\Sigma$, i.e.
$$
N(x)=0\quad\text{for all}\,\,\, x\in \Sigma \,.
$$ 
This means that, either the maximum entropy region is the whole cylinder $[t_0,+\infty)\times U$ or, $T<\infty$ and either $N>0$ or $t_0=T$ and the lapse function $N$ vanishes on the surface $\Sigma$. In this case $\Sigma$ is the so called a {\em event horizon} and we have the maximal Weyl entropy satisfies
$$
\S^{\bf{pf}}_{U}=\text{Area}(\Sigma)\left(1+ \sup_U \mathbf{s_{crit}}\right)=\S_{\text{GH}}\left(1+ \sup_U \mathbf{s_{crit}}\right)
$$
where $\S_{\text{GH}}$ is governed by the same area law as the black hole entropy but is associated with a cosmological event horizon, and was introduced by Gibbons and Hawking \cite{gibhaw}.  It is still a maximal entropy in itself, and is associated with a sort of equilibrium state of the geometry, i.e. with a static solution.

Reasoning as above, the same things for the Weyl entropy  $\S_U$ happen in a maximal $k$-perfect fluid magnetic region, as a consequence of Theorem \ref{teo-mainM}. We note that a static region cannot occur, since in case the Weyl tensor would be pure electric. Therefore, maximal Weyl entropy regions must be flat (vacuum).

\subsection{Minimal case.} We deal now with the minimal case. We say that a time slice $U_{t_0}=\{t_0\}\times U$, $t_0\in I$, has {\em minimal Weyl entropy} at time $t_0$, if the Weyl entropy $\S^{\bf{pf}}$ in $U$ is minimal at time $t_0\in I$. Since $\S^{\bf{pf}}_{U}\geq 0$, the minimal value is zero, from $|\W|_{\gbr}\geq 0$ and $\mathbf{s_{crit}}\geq 0$, we obtain that at 
\begin{equation}\label{eq-cmin}
\W\equiv 0,\quad \mathbf{s_{crit}}\equiv 0\quad\text{on }\,U_{t_0}\quad\Longleftrightarrow\quad \W\equiv 0,\quad \alpha\equiv\frac13\quad\text{on }\,U_{t_0}
\end{equation}
Assume now that $U_{t_0}$ has minimal Weyl entropy at time $t_0$. By the monotonicity in Theorem \ref{t-mon}, all the past slices $U_t$, have minimal entropy, for all $I\ni t\leq t_0$. In this case, for all $I\ni t\leq t_0$, the Weyl tensor vanishes and $\alpha=\frac13$ on $U_{t}$. We claim that on the spacetime cylinder $\left([0,t_0]\cap I\right)\times U$ the metric must be of Friedmann-Lema\^{i}tre-Robertson-Walker type, i.e. the spacetime metric has the form
$$
\gb=-N(t)^2\,dt^2+a(t)^2\,g^K_{ij}(x)dx^i\,dx^j,
$$
where $N=N(t)$ is the original lapse function which now depends only on time, $a=a(t)$ is a positive function depending only on time and $g^K$ is a Riemannian metric of constant sectional curvature $K$ on $U$. It is a FLRW metric up to rescaling the time appropriately with the lapse function. In fact, from \eqref{eq-cmin} and the definitions of $\alpha$-expansion \eqref{d-pfe} and \eqref{d-pfm}, we get that the slices must be umbilical
$$
h_{ij}=\frac13 H\,g_{ij}.
$$
In particular, from \eqref{eq-ricvac}, we obtain that 
$$
0 = \nabla_j H-\nabla_k h_{jk} = \frac{2}{3} \nabla_j H \,,
$$
i.e. the foliation has constant (in space) mean curvature $H\equiv H(t)$ on $U$. Thus, from umbilicity and \eqref{eq-sf}, we get 
\begin{equation}\label{eq-dth}
\partial_t h_{ij} = \frac13 (\partial_t H)g_{ij}+\frac13 H\partial g_{ij}=\frac19\left(\partial_t H-2NH^2\right)g_{ij}.
\end{equation}
Since $\T_{TT}=M$, $\T_{ij}=P g_{ij}=(k-1)M g_{ij}$ and $\W\equiv 0$, from \eqref{eq-g3} and \eqref{eq-dth}, we obtain
$$
0 = N\W_{TiTj}=-N\frac{2-k}{2}M g_{ij}+N\frac{4-3k}{3}M g_{ij} +\frac19\left(\partial_t H-2NH^2\right)g_{ij}+\frac{1}{9}NH^2 g_{ij}+\nabla_i\nabla_j N,
$$
and thus
$$
\nabla_i\nabla_j N = \varphi(t,x) g_{ij}
$$
for some function $\varphi$. Using this equation and \eqref{eq-dth} in \eqref{eq-ricvac}, we obtain
$$
N\frac{2-k}{2}M g_{ij} = N\,R_{ij}-\frac19\left(\partial_t H-2NH^2\right)g_{ij}+\frac{1}{3}NH^2 g_{ij}-\frac{2}{9}NH^2 g_{ij}-\varphi g_{ij}
$$
and therefore, the Ricci tensor of the metric $g$ satisfies in $U$
$$
R_{ij}=\psi(t,x)g_{ij}
$$
for some function $\psi$. By Bianchi identity, the function $\psi$ must be constant in space, $\psi=\psi(t)$, and the metric $g$ is Einstein. Since $U\subset M^3$ is three-dimensional, $(U,g)$ must have constant (in space) sectional curvature. Thus we have proved that 
$$
g_{ij}(t,x)=a(t)^2\,g^K_{ij}(x)
$$
where $g^K$ is a Riemannian metric of constant sectional curvature $K$ on $U$. Thus, the spacetime metric $\gb$ takes the form
$$
\gb=-N(t,x)^2\,dt^2+a(t)^2\,g^K_{ij}(x)dx^i\,dx^j.
$$
It remains to show that the lapse function is constant in space, $N=N(t)$. By the conformal flatness of $\gb$, $\W\equiv 0$, we have that the conformal metric
$$
\widetilde{\gb}=N(t,x)^{-2}\gb=-dt^2+\left[\frac{a(t)}{N(t,x)}\right]^2\,g^K_{ij}(x)dx^i\,dx^j.
$$
is conformally flat. From well known results in Riemannian (or, more in general, pseudo-Riemannian) Geometry, it is possible to show (by direct local computation) that the only possibility for $\widetilde{\gb}$ to be conformally flat, is that the warping function $a/N$ depends only on $t$, i.e. the lapse function $N$ depends only on $t$. This concludes the proof of the claim. 

To conclude, we have shown that if a time slice $U_{t_0}=\{t_0\}\times U$ has minimal (zero) Weyl entropy, then the spacetime cylinder $\left(\left([0,t_0]\cap I\right)\times U, \gb\right)$ is a FLRW spacetime. Again, by uniqueness for the Cauchy development, this must happen on the entire cylinder $I\times U$, if the FLRW solution exists until the final time $T\in I$. 

As this situation is unphysical, we can avoid it by assuming the condition $\D_T \alpha(0)<0$ at some point $x\in U$, which, under our hypotheses, 
would imply  $\alpha(t,x)<\frac13$ for all $t\in I$. This hypothesis is quite mild, as it correspond to assuming that the Universe is non-homogeneous 
for $t>0$. 

\

\section{Conclusions}

We have taken into account the longstanding  Penrose's Weyl Curvature Hypothesis, concerning the expression and the time evolution of the so-called gravitational entropy. We have privileged a general framework which is able to  encompass specific models discussed in literature. On the one hand, the Weyl entropy density $\S$ and the related Weyl entropy for the case of a perfect fluid, $ \S^{\bf{pf}} := \S+ \mathbf{s_{crit}} \sqrt{g}$ have been introduced, and we have shown that they are  monotonically increasing in time under very general assumptions, 
for the so-called pure spacetime regions, i.e. regions purely electric/magnetic, with reference to the 
standard decomposition of the Weyl tensor. On the other hand, 
our ansatz for the Weyl entropy of compact regions $\S^{\bf{pf}}_{U}$ has been show to satisfy monotonicity in time too, under 
physically reasonable hypotheses. In our framework, we have also found that $\S^{\bf{pf}}_{U}$  is maximal 
for vacuum static metrics, which should then correspond to sort of equilibrium states of the gravitational field. 
As a further development of this work, one should look for a statistical mechanical derivation of the expression for the 
candidate Weyl entropy, and, furthermore, try to relax the requirement for pure spacetime regions. Both these tasks 
represent a very interesting field of investigation, to be deferred to future works.

\

\begin{ackn}
\noindent The second author is member of the Gruppo Nazionale per l'Analisi Matematica, la Probabilit\`{a} e le loro Applicazioni (GNAMPA) of the Istituto Nazionale di Alta Matematica (INdAM).
\end{ackn}

\

\appendix\section{}

We have the following formulas relating the Weyl tensor components.

\begin{lemma}\label{l-alg} Let $(\X,\gb)$ be a globally hyperbolic  spacetime. Then  
\begin{align*}
\W_{TiTj} &= g^{kl} \W_{kilj} \\
\W_{ijkl} &= \left(\W_{TiTk}\,g_{jl}-\W_{TiTl}\,g_{jk}+\W_{TjTl}\,g_{ik}-\W_{TjTk}\,g_{il}\right) \,.
\end{align*}
\end{lemma}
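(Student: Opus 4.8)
The plan is to derive both identities purely algebraically from the two defining features of the Weyl tensor: it possesses all the symmetries of a curvature tensor (pairwise antisymmetry, pair exchange, and the first Bianchi identity) and it is totally trace-free, $\gb^{\a\g}\W_{\a\b\g\d}=0$. Throughout I would work in the adapted frame in which $\gb_{TT}=-1$, $\gb_{Ti}=0$, $\gb_{ij}=g_{ij}$, so that the inverse metric splits as $\gb^{TT}=-1$, $\gb^{Ti}=0$, $\gb^{ij}=g^{ij}$, and every contraction over a spacetime index separates cleanly into a temporal piece and a spatial piece.

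For the first identity I would simply contract the trace-free condition on the appropriate pair. Writing out $\gb^{\a\g}\W_{\a i\g j}=0$ and splitting the sum into its temporal and spatial parts gives $\gb^{TT}\W_{TiTj}+g^{kl}\W_{kilj}=0$ (the mixed terms vanish since $\gb^{Ti}=0$), and $\gb^{TT}=-1$ turns this into exactly $\W_{TiTj}=g^{kl}\W_{kilj}$.

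For the second identity the key observation is that the purely spatial block $\W_{ijkl}$, with indices ranging over $M^3$, inherits all of the algebraic curvature symmetries and therefore is an algebraic curvature tensor on the three-dimensional slice. In dimension three every such tensor is completely determined by its trace through the decomposition already recorded in \eqref{eq-rm3}, which I would apply in the form
$$\W_{ijkl}=\left(S_{ik}g_{jl}-S_{il}g_{jk}+S_{jl}g_{ik}-S_{jk}g_{il}\right)-\frac{S}{2}\left(g_{ik}g_{jl}-g_{il}g_{jk}\right),$$
where $S_{ik}:=g^{jl}\W_{ijkl}$ and $S:=g^{ik}S_{ik}$. It then remains to identify $S_{ik}$ and to show $S=0$. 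Using the pairwise antisymmetries one checks $g^{jl}\W_{ijkl}=g^{jl}\W_{jilk}$, and contracting $\gb^{\a\g}\W_{\a i\g k}=0$ gives $\W_{TiTk}=g^{jl}\W_{jilk}$, so that $S_{ik}=\W_{TiTk}$. Contracting the trace-free condition once more, this time as $\gb^{\a\g}\W_{\a T\g T}=0$, yields $g^{ik}\W_{TiTk}=0$ (the temporal term drops since $\W_{TTTT}=0$), whence $S=0$. The $g_{ik}g_{jl}$ term then disappears and we are left with precisely the stated formula.

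The computations are all elementary index manipulations, so the only conceptual point—and the step I would state most carefully—is the appeal to the three-dimensional decomposition \eqref{eq-rm3} for the block $\W_{ijkl}$. I would stress that this decomposition is a purely algebraic identity valid for \emph{any} tensor carrying the symmetries of a curvature tensor in three dimensions (equivalently, the statement that the Weyl part of such a tensor vanishes identically in $3$D), and hence applies to the restriction of $\W$ even though $\W_{ijkl}$ is not the Riemann tensor of any metric. The one bookkeeping detail requiring attention is matching the trace convention of \eqref{eq-rm3}, namely verifying that $g^{jl}\W_{ijkl}$ is indeed the trace that reproduces the coefficient tensor $S_{ik}$ under contraction.
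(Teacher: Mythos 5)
Your proof is correct, but it takes a genuinely different route from the paper's for the second identity (the first identity is obtained the same way in both, by splitting the trace-free condition $\gb^{\a\g}\W_{\a i\g j}=0$ in the adapted frame). The paper proceeds geometrically: it combines the Gauss--Codazzi relations \eqref{eq-g1}, \eqref{eq-g3} with the evolution and constraint equations \eqref{eq-ricvac}, \eqref{eq-coneq} to express $\W_{TiTj}$ in terms of $R_{ij}$, $\T_{ij}$ and the second fundamental form (equation \eqref{eq-wt}), substitutes into \eqref{eq-g1}, and is then left with a residual tensor $Z_{ijkl}$ quadratic in $h$ whose vanishing is checked by diagonalizing $h$. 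Your argument is purely algebraic and pointwise: the spatial block $\W_{ijkl}$ inherits all the algebraic curvature symmetries, so in dimension three it is reproduced by its own Ricci contraction via \eqref{eq-rm3}; the contraction $S_{ik}=g^{jl}\W_{ijkl}$ is identified with $\W_{TiTk}$ by the first identity, and $S=g^{ik}\W_{TiTk}=0$ by a further trace. Your bookkeeping is right: with the paper's convention, $g^{jl}R_{ijkl}=R_{ik}$ is exactly the trace that makes \eqref{eq-rm3} self-consistent, and the pair antisymmetries give $g^{jl}\W_{ijkl}=g^{jl}\W_{jilk}=\W_{TiTk}$. What your approach buys is economy and generality: it never invokes the Einstein equation, the second fundamental form, or any curvature identity of the foliation, making it transparent that the lemma is a purely algebraic consequence of the Weyl symmetries and the orthogonal splitting of the metric. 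What the paper's computation buys in exchange is the intermediate formula \eqref{eq-wt} expressing $\W_{TiTj}$ in terms of the initial-data quantities $(R_{ij},h_{ij},\T_{ij})$, which is of independent use elsewhere in the paper.
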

\begin{proof}
Since $\W$ is trace free,  one has 
$$
0 =\gb^{\a\b} \W_{\a i \b j} = - \W_{TiTj} + g^{kl} 	\W_{kilj} 
$$
and the first equality follows. To obtain the second equation we recall that from \eqref{eq-ricvac} and \eqref{eq-coneq} one has
\begin{align*}
N\left(\T_{ij}-\frac12\T g_{ij}\right) &= N R_{ij} -\p_t h_{ij} +N H h_{ij}-2N h_{il}h_{jl} -\nabla_i \nabla_j N\\
R + H^2 -|h|^2 &= 2\T_{TT}\,.
\end{align*}
Hence, from \eqref{eq-g3} we get
\begin{align}\label{eq-wt}
\W_{TiTj} = R_{ij} - \frac12 \T_{ij} - \frac{3R-2\T}{12} g_{ij}-\frac{H^2-|h|^2}{4}g_{ij}+Hh_{ij}-h_{il}h_{jl}\,.
\end{align}
Thus, from \eqref{eq-g1} one obtain
\begin{align*}
\W_{ijkl} &= \left(\W_{TiTk}\,g_{jl}-\W_{TiTl}\,g_{jk}+\W_{TjTl}\,g_{ik}-\W_{TjTk}\,g_{il}\right)\\
&\quad-\frac{|h|^2-H^2}{2}\left(g_{ik}g_{jl}-g_{il}g_{jk}\right)+ h_{ik}h_{jl}-h_{il}h_{jk} \\
&\quad +\left(h_{ip}h_{kp}g_{jl} -h_{ip}h_{lp}g_{jk}+h_{jp}h_{lp}g_{ik}-h_{jp}h_{kp}g_{il}\right)\\
&\quad - H\left(h_{ik}g_{jl}-h_{il}g_{jk}+h_{jl}g_{ik}-h_{jk}g_{il}\right)
\end{align*}
Setting
\begin{align*}
Z_{ijkl} &:= -\frac{|h|^2-H^2}{2}\left(g_{ik}g_{jl}-g_{il}g_{jk}\right)+ h_{ik}h_{jl}-h_{il}h_{jk} \\
&\quad +\left(h_{ip}h_{kp}g_{jl} -h_{ip}h_{lp}g_{jk}+h_{jp}h_{lp}g_{ik}-h_{jp}h_{kp}g_{il}\right)\\
&\quad - H\left(h_{ik}g_{jl}-h_{il}g_{jk}+h_{jl}g_{ik}-h_{jk}g_{il}\right)\,,
\end{align*}
Take a basis diagonalizing $h$, one has $h_{ij}=\mu_i \d_{ij}$ and for $i\neq j\neq k$ we get
\begin{align*}
Z_{ijij} &:= -\frac{|h|^2-H^2}{2} + \mu_i \mu_j + \mu_i^2 + \mu_j^2 - H(\mu_i+\mu_j)\\
&= \frac12 \left[(\mu_i+\mu_j+\mu_k)^2-(\mu_i^2+\mu_j^2+\mu_k^2) \right]+ \mu_i \mu_j + \mu_i^2 + \mu_j^2 - (\mu_i+\mu_j+\mu_k)(\mu_i+\mu_j) \\
&= 2\mu_i \mu_j + \mu_i\mu_k + \mu_j\mu_k + \mu_i^2 + \mu_j^2 - (\mu_i+\mu_j+\mu_k)(\mu_i+\mu_j) = 0
\end{align*}
Thus $Z\equiv 0$ and the proof is completed.
\end{proof}

As a consequence, we obtain the following identities.

\begin{lemma}\label{l-for} Let $(\X,\gb)$ be a globally hyperbolic spacetime. Then the following identity hold
\begin{align*}
|\W_{\a\b\g\d}|^2 &=-4|\W_{Tijk}|^2+8|\W_{TiTj}|^2,\\
|\A_{\a\b\g\d}|^2 &=-4|\A_{Tijk}|^2+4|\A_{TiTj}|^2+|\A_{ijkl}|^2\\
|\R_{\a\b\g\d}|_{\gbr}^{2}&=|\W_{\a\b\g\d}|_{\gbr}^{2}+|\A_{\a\b\g\d}|_{\gbr}^{2}
\end{align*}
and, if $\A_{Tijk}=0$, then
$$
|\A_{\a\b\g\d}|_{\gbr}^{2}=|\A_{\a\b\g\d}|^{2}\,.
$$
In particular, if $(\X,\gb)$ satisfies \eqref{H1}, then
\begin{align*}
|\W_{\a\b\g\d}|^2 &=8|\W_{TiTj}|^2.
\end{align*}

\end{lemma}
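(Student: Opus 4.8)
The plan is to work in a local $\gb$-orthonormal frame $\{e_0 = T, e_1, e_2, e_3\}$ adapted to the foliation, so that $\gb(e_0,e_0) = -1$ and $\gb(e_a,e_b) = \delta_{ab}$, while $\gbr(e_0,e_0)=+1$ with the same spatial part. The whole statement then follows from a single algebraic fact, which I would isolate first: for any $(0,4)$-tensor $\mathbf{P}$ carrying the symmetries of a curvature tensor (antisymmetry in the first and in the last pair, and pair-exchange symmetry), both norms decompose along the ``sectors'' determined by how many indices equal $0=T$. Since each antisymmetric pair can carry at most one timelike index, only the purely spatial components $\mathbf{P}_{ijkl}$, the ``magnetic'' components $\mathbf{P}_{Tijk}$ (one timelike index) and the ``electric'' components $\mathbf{P}_{TiTj}$ (one in each pair) occur.

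The key step is to count signs and multiplicities. In $|\mathbf{P}|^2 = \mathbf{P}_{\a\b\g\d}\mathbf{P}^{\a\b\g\d}$ each raised timelike index contributes a factor $\gb^{00} = -1$, whereas spatial indices are raised with the positive $g$. Summing over all index positions and using the pair symmetries to identify equal magnitudes, the single timelike index of the magnetic sector may sit in any of the four slots (four equal copies, each with one factor $-1$), while the two timelike indices of the electric sector occupy one slot in each pair (four equal copies, sign $(-1)^2=+1$). This yields the general identity
\begin{equation*}
|\mathbf{P}_{\a\b\g\d}|^2 = |\mathbf{P}_{ijkl}|^2 - 4|\mathbf{P}_{Tijk}|^2 + 4|\mathbf{P}_{TiTj}|^2,
\end{equation*}
with all spatial norms positive definite. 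Applied verbatim to $\mathbf{P} = \A$ this is the second displayed identity. Applied to $\mathbf{P}=\W$ it gives $|\W|^2 = |\W_{ijkl}|^2 - 4|\W_{Tijk}|^2 + 4|\W_{TiTj}|^2$; here I would then invoke the previous Lemma~\ref{l-alg}, whose second formula expresses $\W_{ijkl}$ through the symmetric, trace-free electric part $\W_{TiTj}$, and a short Kulkarni--Nomizu computation in dimension three gives $|\W_{ijkl}|^2 = 4|\W_{TiTj}|^2$. Substituting produces the first identity $|\W|^2 = -4|\W_{Tijk}|^2 + 8|\W_{TiTj}|^2$, and setting $\W_{Tijk}\equiv0$ (condition \eqref{H1}) gives the last assertion $|\W|^2 = 8|\W_{TiTj}|^2$.

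For the third identity I would record the analogous Riemannian decomposition: since $\gbr^{00} = +1$, every sector now carries a plus sign,
\begin{equation*}
|\mathbf{P}_{\a\b\g\d}|_{\gbr}^2 = |\mathbf{P}_{ijkl}|^2 + 4|\mathbf{P}_{Tijk}|^2 + 4|\mathbf{P}_{TiTj}|^2.
\end{equation*}
Writing $\R = \W + \A$ and expanding $|\R|_{\gbr}^2$ sector by sector leaves the cross term $2\langle\W,\A\rangle_{\gbr}$, which I must show vanishes. The only sector in which $\langle\W,\A\rangle_{\gbr}$ and the Lorentzian pairing $\langle\W,\A\rangle_{\gb}$ can differ is the magnetic one (the signs agree in the other two); but $\A_{Tijk} = \tfrac12(\A_{Tj}g_{ik}-\A_{Tk}g_{ij})$ is pure trace in its spatial slots, while the spatial traces $g^{ik}\W_{Tijk}$ and $g^{ij}\W_{Tijk}$ vanish because $\W$ is trace-free, so $\langle\W_{Tijk},\A_{Tijk}\rangle = 0$. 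Hence $\langle\W,\A\rangle_{\gbr} = \langle\W,\A\rangle_{\gb}$, and the latter is $0$ by \eqref{rm2} (namely $|\R|^2 = |\W|^2 + |\A|^2$). This gives $|\R|_{\gbr}^2 = |\W|_{\gbr}^2 + |\A|_{\gbr}^2$. Finally, comparing the Lorentzian and Riemannian decompositions of $\A$ shows they differ exactly by $8|\A_{Tijk}|^2$, so $|\A|_{\gbr}^2 = |\A|^2$ as soon as $\A_{Tijk}=0$.

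I expect the main obstacle to be purely combinatorial: pinning down the coefficients $-4$, $+4$, $+1$ (and their Riemannian counterparts) by correctly tracking, for each sector, both the sign coming from the Lorentzian signature and the multiplicity coming from the antisymmetry within pairs together with the pair-exchange symmetry. The only genuinely tensorial point is the vanishing of the magnetic cross term in the third identity, which hinges on the trace-freeness of $\W$ and is precisely what makes the Weyl/Schouten splitting orthogonal for the Riemannian norm as well as for the Lorentzian one.
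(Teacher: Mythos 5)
Your proof is correct and follows essentially the same route as the paper: the signed sector decomposition of curvature-type norms in a frame adapted to the foliation (coefficients $+1$, $-4$, $+4$ for the spatial, magnetic and electric sectors, all signs becoming positive for $\gbr$), combined with Lemma~\ref{l-alg} to convert $|\W_{ijkl}|^2$ into $4|\W_{TiTj}|^2$. The one place you go beyond the paper --- which simply asserts $|\R_{\a\b\g\d}|_{\gbr}^{2}=|\W_{\a\b\g\d}|_{\gbr}^{2}+|\A_{\a\b\g\d}|_{\gbr}^{2}$ from the three displayed sector formulas --- is your explicit check that the cross term $\langle\W,\A\rangle_{\gbr}$ vanishes, by noting it differs from the Lorentzian pairing only in the magnetic sector, where $\A_{Tijk}$ is pure trace while $\W$ is trace-free; this is a correct and welcome completion of a step the paper leaves implicit.
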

\begin{proof}
One has
$$
|\W_{\a\b\g\d}|^2=-4|\W_{Tijk}|^2+4|\W_{TiTj}|^2+|\W_{ijkl}|^2\,.
$$
Using Lemma \ref{l-alg} we get
$$
|\W_{ijkl}|^2 = 4 |\W_{TiTj}|^2\,,
$$
and the first identity follows. The second identity follows for the same reason. By the definition of $\gbr$, we immediately have
\begin{align*}
|\R_{\a\b\g\d}|_{\gbr}^2&=4|\R_{Tijk}|^2+4|\R_{TiTj}|^2+|\R_{ijkl}|^2\\
|\W_{\a\b\g\d}|_{\gbr}^2&=4|\W_{Tijk}|^2+4|\W_{TiTj}|^2+|\W_{ijkl}|^2,\\
|\A_{\a\b\g\d}|_{\gbr}^2 &=4|\A_{Tijk}|^2+4|\A_{TiTj}|^2+|\A_{ijkl}|^2
\end{align*}
and the last formula holds.
\end{proof}

Concerning the covariant derivative $\D_{\e} \W_{\a\b\g\d}$ we have the following useful formulas which hold in Pure Electric Spacetime:

\begin{lemma}\label{l-gra} Let $(\X,\gb)$ be a globally hyperbolic spacetime satisfying \eqref{H1}. Then 
\begin{align*}
\D_{T} \W_{ijkl} &= 2\left( h_{ik} \W_{TjTl} - h_{jk} \W_{TiTl}+h_{jl} \W_{TiTk}-h_{il} \W_{TjTk}\right) \\
&\quad+h_{lp}\left(\W_{TjTp}\,g_{ik}-\W_{TiTp}\,g_{jk}\right) +h_{kp}\left(\W_{TiTp}\,g_{jl}-\W_{TjTp}\,g_{il}\right)\\
&\quad +\frac12\left(\C_{ilT}g_{jk}+\C_{iTk}g_{jl}-\C_{jlT}g_{ik}+\C_{jTk}g_{il}\right)\,,\\
\D_{l} \W_{ijkT} &= 2\left(h_{jl} \W_{TiTk}-h_{il} \W_{TjTk}\right)+h_{lp}\left(\W_{TjTp}\,g_{ik}-\W_{TiTp}\,g_{jk}\right)  \,,\\
\D_T \W_{TiTj} &= 2H \W_{TiTj} - 2h_{il}\W_{TjTl}-h_{jl}\W_{TiTl}+h_{kp}\W_{TkTp}\,g_{ij}- \frac12 \C_{ijT}\,.
\end{align*}
\end{lemma}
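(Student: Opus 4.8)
The plan is to reduce each of the three spacetime covariant derivatives to foliation data by combining three ingredients: the Gauss--Weingarten relations of the slices $M_t$, the hypothesis \eqref{H1} which kills all magnetic components $\W_{Tijk}$, and the algebraic Lemma \ref{l-alg}, which rewrites the purely spatial component $\W_{ijkl}$ through the electric components $\W_{TiTj}$. First I would record the connection coefficients induced by \eqref{eq-sf}: writing $T=N^{-1}\p_t$, torsion-freeness together with $h_{ij}=-\frac{1}{2N}\p_t g_{ij}$ give $\D_{\p_i}T=-h_i{}^k\p_k$, $\D_{\p_i}\p_j=\nabla_{\p_i}\p_j-h_{ij}T$, $\D_T\p_i=N^{-1}(\nabla_i N)T-h_i{}^k\p_k$ and $\D_T T=N^{-1}\nabla N$.

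I would prove the middle identity first, since it is the cleanest and feeds the other two. Expanding $(\D_{\p_l}\W)(\p_i,\p_j,\p_k,T)$ by the Leibniz rule, the scalar term $\p_l\W_{ijkT}$ vanishes because $\W_{ijkT}=-\W_{Tkij}\equiv0$ by \eqref{H1}. Among the connection terms, every contribution that still carries three spatial legs and one $T$ is again a magnetic component and dies by \eqref{H1}; the terms where the Gauss formula $\D_{\p_l}\p_m=\nabla_{\p_l}\p_m-h_{lm}T$ converts a spatial leg into $T$ produce electric components $\W_{TiTj}$, while $\D_{\p_l}T=-h_l{}^p\p_p$ produces the single purely spatial term $h_l{}^p\W_{ijkp}$. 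Rewriting that term by Lemma \ref{l-alg} and collecting yields the stated formula for $\D_l\W_{ijkT}$, with no lapse or Cotton contribution.

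For $\D_T\W_{ijkl}$ I would apply the second Bianchi identity \eqref{eq-sbinv} with differentiation index $T$, namely $\D_T\W_{ijkl}+\D_k\W_{ijlT}+\D_l\W_{ijTk}=(\text{RHS of }\eqref{eq-sbinv})$; the two spatial derivatives $\D_k\W_{ijlT}$ and $\D_l\W_{ijTk}$ are instances of the middle identity (using $\W_{ijTk}=-\W_{ijkT}$), and the right-hand side, after dropping the terms carrying a factor $\gb_{iT}=\gb_{jT}=0$, reduces to the Cotton combination in the statement. For the electric identity $\D_T\W_{TiTj}$ I would instead use the contracted second Bianchi identity, i.e. the fact that the divergence of the Weyl tensor is the Cotton tensor: decomposing $\D^\a\W_{\a iTj}$ in the frame gives $-\D_T\W_{TiTj}+g^{pq}\D_q\W_{piTj}=\frac{1}{2}\C_{ijT}$, and evaluating $g^{pq}\D_q\W_{piTj}=-g^{pq}\D_q\W_{pijT}$ by the middle identity produces precisely $2H\W_{TiTj}$, the two mixed $h$-$\W$ terms, and the trace term $h_{kp}\W_{TkTp}\,g_{ij}$, which assembles into the claim.

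The main obstacle will be the sign and index bookkeeping of the Cotton contributions issuing from \eqref{eq-sbinv}: its right-hand side is a six-term expression in $\C$ contracted against $\gb$, and landing exactly on the stated combinations requires careful use of the antisymmetry $\C_{\a\b\g}=-\C_{\a\g\b}$ together with the Weyl symmetries (this is also where one should double-check the sign of the $\C_{jTk}g_{il}$ term). A secondary point is to verify that the acceleration terms $N^{-1}\nabla N$, which appear both in $\D_T\p_i$ and inside the spatial covariant derivatives of the electric part, cancel out, so that only $h$, $H$ and $\C$ remain in the final expressions.
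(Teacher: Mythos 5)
Your proposal is correct and follows essentially the same route as the paper: the middle identity is obtained by expanding the covariant derivative through the Christoffel symbols of the foliation, using \eqref{H1} to kill the magnetic components and Lemma \ref{l-alg} to convert the residual spatial term $h_{lp}\W_{ijkp}$ into electric components, and the first identity then follows from the second Bianchi identity \eqref{eq-sbinv} with differentiation index $T$, exactly as in the paper. The only (harmless) deviation is the third identity, which the paper derives by applying $\D_T$ to the trace relation $\W_{TiTj}=g^{kl}\W_{kilj}$ and substituting the first formula, whereas you contract \eqref{eq-sbinv} over the differentiation slot to invoke the divergence identity $\D^{\a}\W_{\a\b\g\d}=\tfrac12\C_{\b\d\g}$ — both are traces of the same Bianchi identity and yield the stated result, and your flag about double-checking the sign of the $\C_{jTk}g_{il}$ term is well placed, since the raw contraction of \eqref{eq-sbinv} with $\gb_{iT}=\gb_{jT}=0$ produces $-\C_{jTk}g_{il}$ rather than the $+\C_{jTk}g_{il}$ written in the statement.
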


\begin{proof}
From the second bianchi identity \eqref{eq-sbinv} we have
\begin{align*}
\D_T \W_{ijkl} &= - \D_{k} \W_{ijlT}-\D_{l} \W_{ijTk} +\frac12\left(\C_{ilT}g_{jk}+\C_{iTk}g_{jl}-\C_{jlT}g_{ik}+\C_{jTk}g_{il}\right) \\
&= \D_{l} \W_{ijkT}- \D_{k} \W_{ijlT} +\frac12\left(\C_{ilT}g_{jk}+\C_{iTk}g_{jl}-\C_{jlT}g_{ik}+\C_{jTk}g_{il}\right)\,.
\end{align*}
Moreover, from the definition of covariant derivative and condition \eqref{H1} implies
\begin{align*}
\D_{l} \W_{ijkT} &= \p_{l}\W_{ijkT} - \Gb_{li}^T \W_{TjkT}-\Gb_{lj}^T\W_{iTkT}-\Gb_{lk}^{\a}\W_{ij\a T}-\Gb_{lT}^p \W_{ijkp}\\
&=  \Gb_{li}^T \W_{TjTk}-\Gb_{lj}^T\W_{TiTk}-\Gb_{lT}^p \W_{ijkp} \\
&= -h_{il}\W_{TjTk}+h_{jl}\W_{TiTk}+h_{lp}\W_{ijkp}\,,
\end{align*}
where we have used the formulas 
\begin{align}\label{eq-chr} \nonumber
\Gb_{tt}^{t} &= -\frac{1}{2N^2}\p_t (N^2) = -\frac{\p_t N}{N}\,,\\\nonumber
\Gb_{tt}^{i} &= \frac{1}{2}\p_i(N^2) = N \p_i N\,,\\
\Gb_{ti}^{j} &= \frac{1}{2}g^{jk} \p_t g_{ik} = - N h_{ij} \,,\\\nonumber
\Gb_{ij}^{t} &= \frac{1}{2N^2}\p_t g_{ij} = -\frac{1}{N} h_{ij}\,,\\\nonumber
\Gb_{ij}^{k} &= \Gamma_{ij}^{k}\,,
\end{align}
where $\Gamma_{ij}^{k}$ are the Christoffel symbols of the metric $g$. In particular, we have 
$$
\Gb_{Ti}^j = - h_{ij}\quad\text{and}\quad \Gb_{ij}^T = -h_{ij}\,.
$$
Rewriting the last equations we have proved that
\begin{align}\label{eq-nw1s}
\D_{T} \W_{ijkl} &= -h_{il}\W_{TjTk}+h_{jl}\W_{TiTk}+h_{lp}\W_{ijkp} + h_{ik}\W_{TjTl}-h_{jk}\W_{TiTl}-h_{kp}\W_{ijlp} \\ \nonumber
&\quad +\frac12\left(\C_{ilT}g_{jk}+\C_{iTk}g_{jl}-\C_{jlT}g_{ik}+\C_{jTk}g_{il}\right) \,,
\end{align}
\begin{equation}\label{eq-nw2s}
\D_{l} \W_{ijkT} = -h_{il}\W_{TjTk}+h_{jl}\W_{TiTk}+h_{lp}\W_{ijkp} \,.
\end{equation}
On the other hand, by Lemma \ref{l-alg} one has
\begin{align*}
\D_T \W_{TiTj} &= \D_T \left(g^{kl} \W_{kilj}\right) \\
&= \left(\D_T \gb^{kl}\right) \W_{kilj} + g^{kl} \D_T \W_{kilj}\\
&= g^{kl} \D_T \W_{kilj} \\
&= g^{kl}\left( -h_{kj}\W_{TiTl}+h_{ij}\W_{TkTl}+h_{jp}\W_{kilp}+h_{kl}\W_{TiTj}-h_{il}\W_{TkTj}-h_{lp}\W_{kijp}\right)- \frac12 \C_{ijT}\\
&= H\W_{TiTj}-h_{il}\W_{TlTj}+h_{kp}\W_{kipj}- \frac12 \C_{ijT},
\end{align*}
i.e.
\begin{equation}\label{eq-nw3s}
\D_T \W_{TiTj} = H\W_{TiTj}-h_{il}\W_{TlTj}+h_{kp} \W_{kipj}- \frac12 \C_{ijT}\,.
\end{equation}
Moreover
\begin{align*}
h_{lp}\W_{ijkp} &= h_{lp}\left(\W_{TiTk}\,g_{jp}-\W_{TiTp}\,g_{jk}+\W_{TjTp}\,g_{ik}-\W_{TjTk}\,g_{ip}\right) \\
&= h_{jl}\W_{TiTk}-h_{il}\W_{TjTk}+h_{lp}\W_{TjTp}\,g_{ik}-h_{lp}\W_{TiTp}\,g_{jk}
\end{align*}
and
\begin{align}\label{eq-nonso}
h_{kp}\W_{kipj} &= h_{kp}\left(\W_{TkTp}\,g_{ij}-\W_{TkTj}\,g_{ip}+\W_{TiTj}\,g_{kp}-\W_{TiTp}\,g_{jk}\right) \\\nonumber
&= h_{kp}\W_{TkTp}\,g_{ij}-h_{ik}\W_{TjTk}-h_{jk}\W_{TiTk}+H\W_{TiTj}
\end{align}
Substituting in \eqref{eq-nw1s}, \eqref{eq-nw2s} and \eqref{eq-nw3s} (and using again Lemma \ref{l-alg}) we get
\begin{align*}
\D_{T} \W_{ijkl} &= 2\left( h_{ik} \W_{TjTl} - h_{jk} \W_{TiTl}+h_{jl} \W_{TiTk}-h_{il} \W_{TjTk}\right) \\
&\quad+h_{lp}\left(\W_{TjTp}\,g_{ik}-\W_{TiTp}\,g_{jk}\right) +h_{kp}\left(\W_{TiTp}\,g_{jl}-\W_{TjTp}\,g_{il}\right)\\
&\quad +\frac12\left(\C_{ilT}g_{jk}+\C_{iTk}g_{jl}-\C_{jlT}g_{ik}+\C_{jTk}g_{il}\right)\,,
\end{align*}
\begin{equation*}
\D_{l} \W_{ijkT} = 2\left(h_{jl} \W_{TiTk}-h_{il} \W_{TjTk}\right)+h_{lp}\left(\W_{TjTp}\,g_{ik}-\W_{TiTp}\,g_{jk}\right) \,,
\end{equation*}
\begin{equation*}
\D_T \W_{TiTj} = 2H \W_{TiTj} - 2h_{il}\W_{TjTl}-h_{jl}\W_{TiTl}+h_{kp}\W_{TkTp}\,g_{ij}- \frac12 \C_{ijT}\,.
\end{equation*}

\end{proof}

As a consequence, we obtain the following formula:
\begin{proposition}\label{p-firnv}  Let $(\X,\gb)$ be a globally hyperbolic spacetime satisfying \eqref{H1}. Then
$$
\frac12 \D_T |\W_{\a\b\g\d}|^2 = 16 H |\W_{TiTj}|^2-24h_{jl}\W_{TiTj}\W_{TiTl}-4\C_{ijT}\W_{TiTj}\,,
$$
where $\C$ is the Cotton tensor of $\gb$.
\end{proposition}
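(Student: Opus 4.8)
The plan is to reduce everything to the electric part $\E_{ij}:=\W_{TiTj}$ and then feed in the covariant-derivative formulas of Lemma \ref{l-gra}. First I would record two structural facts. The tensor $\E$ is symmetric and, by the trace-freeness of the Weyl tensor, satisfies $g^{ij}\E_{ij}=0$; moreover, by Lemma \ref{l-for}, hypothesis \eqref{H1} forces $|\W_{\a\b\g\d}|^2=8|\W_{TiTj}|^2=8|\E|^2$ together with $|\W_{ijkl}|^2=4|\E|^2$. These are the only places where \eqref{H1} really enters.

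Next I would differentiate. Since the Levi-Civita connection of $\gb$ is metric compatible, $\D\gb=0$, so the contraction of $\W$ against $\D_T\W$ splits by the number of $T$-indices exactly as the norm does in \eqref{eq-nw}:
\begin{align*}
\tfrac12\D_T|\W_{\a\b\g\d}|^2 &= \W^{\a\b\g\d}(\D_T\W)_{\a\b\g\d}\\
&= -4\,\W^{Tijk}(\D_T\W)_{Tijk}+4\,\W^{TiTj}(\D_T\W)_{TiTj}+\W^{ijkl}(\D_T\W)_{ijkl}.
\end{align*}
Under \eqref{H1} the first block vanishes, since it carries the factor $\W_{Tijk}\equiv0$. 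Thus only the electric block and the purely spatial block survive, and I would treat them separately.

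For the electric block I would substitute the third identity of Lemma \ref{l-gra} for $\D_T\W_{TiTj}$ and contract against $\W^{TiTj}=\E^{ij}$. The term $h_{kp}\W_{TkTp}\,g_{ij}$ dies against the trace-free $\E$, the two remaining $h$-terms combine into a single $h_{jl}\W_{TiTj}\W_{TiTl}$ contraction, and one is left with
$$
4\,\W^{TiTj}(\D_T\W)_{TiTj}=8H|\E|^2-12\,h_{jl}\W_{TiTj}\W_{TiTl}-2\,\C_{ijT}\W_{TiTj}.
$$
For the spatial block there are two equivalent options. The quickest is to observe that $|\W_{ijkl}|^2=4|\E|^2$ from Lemma \ref{l-for}, whence $\W^{ijkl}(\D_T\W)_{ijkl}=\tfrac12\D_T|\W_{ijkl}|^2=2\D_T|\E|^2$, which is precisely the electric block again; the only subtlety is that the acceleration contributions $\D_T T$ hidden in $\D_T|\E|^2$ drop out, because when contracted with $\E$ they produce one-$T$ Weyl components that vanish under \eqref{H1}. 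Alternatively one substitutes the first identity of Lemma \ref{l-gra} for $\D_T\W_{ijkl}$ together with the expansion of $\W_{ijkl}$ from Lemma \ref{l-alg} and contracts the four spatial indices directly. Either way the spatial block equals the electric block, so the total is twice the displayed expression, giving
$$
\tfrac12\D_T|\W_{\a\b\g\d}|^2=16H|\W_{TiTj}|^2-24\,h_{jl}\W_{TiTj}\W_{TiTl}-4\,\C_{ijT}\W_{TiTj}.
$$

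The main obstacle is the spatial block. Carried out directly it is the heaviest bookkeeping in the argument: the formula for $\D_T\W_{ijkl}$ has four $h\E$ terms, two trace-type $h_{kp}/h_{lp}$ terms and four Cotton terms, while $\W_{ijkl}$ itself expands into four $\E\otimes g$ terms, so the contraction over the four spatial indices with all the resulting Kronecker deltas is exactly where sign and coefficient errors are most likely to creep in. The shortcut via $|\W_{ijkl}|^2=4|\E|^2$ removes essentially all of this, at the cost of the single careful observation that differentiating the pointwise norm identity is legitimate (immediate from $\D\gb=0$) and that the extra $\D_T T$ contributions genuinely cancel under \eqref{H1}. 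I would present the shortcut as the main line and keep the direct contraction in reserve as a cross-check.
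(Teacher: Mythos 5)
Your proof is correct and follows essentially the same route as the paper: both reduce the computation to the electric part via $|\W_{\a\b\g\d}|^2=8|\W_{TiTj}|^2$ and then contract the formula for $\D_T\W_{TiTj}$ from Lemma \ref{l-gra} against $\W_{TiTj}$, using trace-freeness to kill the $h_{kp}\W_{TkTp}\,g_{ij}$ term. The only difference is organizational: the paper rederives $\D_T\W_{TiTj}$ inside the proof from $\W_{TiTj}=g^{kl}\W_{kilj}$ and writes $\tfrac12\D_T|\W|^2=8\,\W_{TiTj}\D_T\W_{TiTj}$ at once, whereas you split the contraction into $T$-index blocks and observe that the spatial block duplicates the electric one; the two bookkeepings are equivalent and land on the same coefficients.
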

\begin{proof}
From Lemma \ref{l-for} one has
$$
\frac12 \D_T |\W_{\a\b\g\d}|^2 = 8 \left(\D_T \W_{TiTj} \right)\W_{TiTj}\,.
$$
Moreover
\begin{align*}
\D_T \W_{TiTj} &= \D_T \left(g^{kl} \W_{kilj}\right) = \left(\D_T \gb^{kl}\right) \W_{kilj} + g^{kl} \D_T \W_{kilj}\\
&= g^{kl} \D_T \W_{kilj}\\
&=g^{kl}\Big[2\left( h_{kl} \W_{TiTj} - h_{il} \W_{TkTj}+h_{ij} \W_{TkTl}-h_{kj} \W_{TiTl}\right) \\
&\quad+h_{jp}\left(\W_{TiTp}\,g_{kl}-\W_{TkTp}\,g_{il}\right) +h_{lp}\left(\W_{TkTp}\,g_{ij}-\W_{TiTp}\,g_{kj}\right)\\
&\quad +\frac12\left(\C_{kjT}g_{il}+\C_{kTl}g_{ij}-\C_{ljT}g_{ik}+\C_{iTl}g_{kj}\right)\Big]\\
&=2\left( H \W_{TiTj} - h_{ik} \W_{TkTj}-h_{kj} \W_{TiTk}\right) \\
&\quad+2h_{jp}\W_{TiTp}+h_{kp}\left(\W_{TkTp}\,g_{ij}-\W_{TiTp}\,g_{kj}\right)+\frac12\left(\C_{ijT}-\C_{ijT}+\C_{iTj}\right)\\
&=2 H \W_{TiTj} - 2h_{ik} \W_{TkTj}-h_{jp}\W_{TiTp}+h_{kp}\W_{TkTp}\,g_{ij}-\frac12\C_{ijT}\,.
\end{align*}
where we have used Lemma \ref{l-gra}. Thus
$$
\left(\D_T \W_{TiTj} \right)\W_{TiTj}=2 H |\W_{TiTj}|^2 - 3h_{jl}\W_{TiTj}\W_{TiTl}-\frac12\C_{ijT}\W_{TiTj}
$$
and the thesis follows.
\end{proof}

\

\

\bibliographystyle{abbrv}

\bibliography{biblio}

\

\

\end{document}